\newcommand{\commentout}[1]{}
\newcommand{\diff}{\mathrm{d}}
\newcommand{\R}{\mathbb{R}}
\newcommand{\1}{\mathbb{1}}
\newcommand {\eps}  {\varepsilon}
\newcommand {\ep}  {\varepsilon}
\newcommand{\dst}{\displaystyle}
\newcommand {\dv}  { {\rm div} }
\newcommand {\f}   {\frac}
\newcommand {\p}   {\partial}
\definecolor{vert}{rgb}{0,0.58,0}
\newcommand{\MD}{\color{vert}}
\newcommand{\beq}{\begin{equation}}
\newcommand{\eeq}{\end{equation}}
\newtheorem{theorem}{Theorem}
\newtheorem{remark}[theorem]{Remark} 
\newtheorem{proposition}[theorem]{Proposition}
\newcommand{\qed}{{ \hfill
                       {\unskip\kern 6pt\penalty 500 \raise -2pt\hbox{\vrule\vbox to 6pt{\hrule width 6pt
                       \vfill\hrule}\vrule} \par}   }}
\title{Multispecies cross-diffusions: from a nonlocal mean-field to a porous medium system without self-diffusion}
\author{Marie Doumic \thanks{Inria and CMAP, IP Paris, Ecole polytechnique, CNRS, 
91128 Palaiseau cedex. Email : Marie.Doumic@inria.fr}\and
Sophie Hecht\thanks{Sorbonne Universit{\'e}, CNRS, Universit\'{e} de Paris, Laboratoire Jacques-Louis Lions UMR7598, F-75005 Paris. 
Email~:~sophie.hecht@sorbonne-universite.fr} \and
Beno\^ \i t Perthame\thanks{Sorbonne Universit{\'e}, CNRS, Universit\'{e} de Paris, Inria, Laboratoire Jacques-Louis Lions UMR7598, F-75005 Paris. 
Email : Benoit.Perthame@sorbonne-universite.fr}
\and Diane Peurichard\thanks{Inria Paris, team MAMBA, Sorbonne Universit{\'e}, CNRS, Universit\'{e} de Paris, Laboratoire Jacques-Louis Lions UMR7598, F-75005 Paris. 
Email : diane.a.peurichard@inria.fr}
}
\date{\today}
\begin{document}
\maketitle
\pagestyle{plain}
\pagenumbering{arabic}

\begin{abstract} 
Systems describing the long-range interaction between individuals have attracted a lot of attention in the last years, in particular in relation with living systems. These systems are quadratic, written under the form of transport equations with a nonlocal self-generated drift.
 
We establish the localisation limit, that is the convergence of nonlocal to local systems, when the range of interaction tends to $0$. These theoretical results are sustained by numerical simulations.
 
The major new feature in our analysis is that we do not need diffusion to gain compactness, at odd with the existing literature. The central compactness result is provided by a full rank assumption on the interaction kernels. In turn, we prove existence of weak solutions for the resulting system, a cross-diffusion system of quadratic type. 

\end{abstract} 
\vskip .7cm

\noindent{\makebox[1in]\hrulefill}\newline
2010 \textit{Mathematics Subject Classification.} 35Q92, 35K55, 35R09, 35B25.
\newline\textit{Keywords and phrases.} Aggregation equation; Cross-diffusion;  Nonlocal interactions; Localisation limit; Mathematical biology; multispecies models
\section{Introduction}

Aggregation models have been used to study a wide range of living systems arising in biology and ecology, such as prey-predator system,  movement of animal herds, aggregation of cells, etc. The simplest example is the non-local one species aggregation equation
\beq 
\p_t u - \dv [ u \nabla K * u ]=0 , \qquad t \geq 0 , \, x \in \R^d,\quad  d\geq 2,
\label{eq:Intro_single}
\eeq
where $u(t,x)$ models the evolution of a single population as a function of time $t\in\R_+$ and space $x\in\R^d$, and $K(x)$ is a self-interaction potential. However, a lot of biological systems are composed of many interacting species  and a generalisation of Eq.~\eqref{eq:Intro_single} to multiple species $N\geq 1$ is written as follows:
\beq 
\p_t u^i - \dv [ u^i  \nabla \sum_{j=1}^N K^{i j} * u^j ]=0 , \qquad t \geq 0 , \, x \in \R^d, 
\label{eq:Intro_multiple}
\eeq
with $u^i(t,x)$ for $i\in [\![1,N]\!]$ the local density of the $i$-th species. Notice that we do not include diffusion, which is the major challenge of our study. The kernel functions $K^{i j}$ describe the self and cross-interaction of the species, which can be either attractive or repulsive. In the literature, typical choices for the kernel function are the attractive-repulsive Morse potentials \cite{Orsogna2006}, Gaussian potentials,  compactly-supported  or yet quadratic potentials~\cite{romanczuk2012active}. These different potentials allow to model aggregative phenomena in population dynamics such as swarming \cite{Orsogna2006,romanczuk2012active,carrillo2013new,Barre2017,Barre2018,Barre2020}.

The non-local terms in Eqs.~\eqref{eq:Intro_single} and \eqref{eq:Intro_multiple} offer various mathematical challenges. In the case of a single population, the model has been widely studied. 
Several properties are now established as existence of solutions \cite{Bertozzi2011,Carrillo2011}, well-posedness of a solution \cite{Bertozzi2011}, long time behaviour \cite{Burger2008}, possible blow-up \cite{Bertozzi2007,Bertozzi2009}. The case $N>1$, where the model includes cross-diffusion, is a more recent subject. Most of the literature considers the equation with an additional diffusion term \cite{Potts2019,bruna2017diffusion,di2018nonlinear,Giunta2021,ALASIO2022113064}. In the case without diffusion, a mathematical theory has been studied in \cite{Colombo2012,Crippa2013} for smooth kernel function, see also~\cite{carrillo2020measure} for a system with singular Newtonian potentials in dimension one. In addition, the existence and uniqueness of measure solutions for symmetrisable systems, i.e., $K^{ij}=CK^{ji}$, is proved in  \cite{Francesco2013} for $N=2$. Another existence proof is presented in~\cite{burger2022porous}, Theorem 6.1.

\medskip

Our interest here is about the {\em localisation limit} for Eqs.~\eqref{eq:Intro_single} and \eqref{eq:Intro_multiple}. Let us define, for $\eps>0$,
$$ 
K_\eps (x) = \frac{1}{\eps^d}K(\frac{x}{\eps}),  
$$
 The localisation limit consists in considering 
$\eps \rightarrow 0$, meaning that the kernel functions converge toward the Dirac delta distribution. The motivation of this limit comes from the derivation of Eqs.~\eqref{eq:Intro_single} and~\eqref{eq:Intro_multiple} from many-particle systems \cite{oelschlager1990large} when the number of particles goes to $\infty$. The localisation limit is the next natural step to recover the macroscopic system linked to the many-particle system.

For the single species model, a sketch of the proof of the localisation limit is proposed in \cite{LMG}. Additional studies consider the limit in the case where diffusion is present \cite{oelschlager1990large,PHILIPOWSKI2007526}. In particular \cite{PHILIPOWSKI2007526} presents the successive limits from an interacting particle system with Brownian motion toward a macroscopic equation without diffusion. More recently, proofs using a gradient-flow approach have also been proposed~\cite{carrillo2019blob,carrillo2023nonlocal}. A common hypothesis found in these papers is that the kernel function is an auto-convolution, i.e., there exists $\rho$ such that
\beq K(x) = \check{\rho}*{\rho}(x) , \label{hyp:Into_1}
\eeq
with $\check{\rho}(\cdot)=\rho(-\cdot)$. This hypothesis allows to recover a priori estimates from the classical entropy $\mathcal{E}(u_\eps) = \int u_\eps \ln u_\eps $. In the case of multiple species $N>1$, the localisation limit has been proven with additional diffusion \cite{ChenDausJungel_2018,JungelPortischZurek_2022}. In \cite{ChenDausJungel_2018}, the limit is proven in the case of small initial data and in \cite{JungelPortischZurek_2022}, the hypotheses made on the interaction kernel are weaker than \eqref{hyp:Into_1}, and only consider that for a given function $p$,
\beq 
\int_{\R^d} pK*p \geq 0 . \label{hyp:Into_2}
\eeq
This hypothesis, together with the diffusion term is enough to obtain estimates on the density. 
Up to our knowledge, only~\cite{burger2022porous} worked out the limit without diffusion for two species, using tools based on Wasserstein distance, a method very different from the one we propose here, with different conditions on the interaction matrix. The Cahn-Hilliard limit is studied in~\cite{carrillo2023degenerate}, following the one-species case in~\cite{Elbar_JakubJDE}. The degenerate two-species cross-diffusion system with growth is also obtained as the limit of a nonlocal system with growth and interaction via a velocity potential in~\cite{david2023degenerate}.

\medskip

The paper is organised as follows. For the sake of clarity, in Sec.~\ref{single}, we first detail the sketch of the proof proposed in \cite{LMG} in the case where the kernel function is of the form \eqref{hyp:Into_1} with $\rho$ a non-negative compactly supported function. A key point of the approach is to prove the compactness of the family $(u_\eps *\rho_\eps)_{\eps>0}$ and therefore its convergence. In Sec.~\ref{sec:multi}, we extend this localisation limit to the multiple-species case \eqref{eq:Intro_multiple}. The hypotheses made on the interaction kernel are then slightly different and consist in considering kernels  $K^{ij}$ that can be decomposed thanks to a matrix $ A:=(\alpha^{kj}) \in \mathcal{M}_{p,N}(\R)$ with $p \geq N$ and $(\rho_\eps^i)_{i \in [\![1,N]\!]} \in (L^1(\R^d))^N$ under the form
\beq  \label{hyp:Into_3}
K^{ij}(x) = \sum_{k=1}^p \alpha^{ki} \alpha^{kj} \check{\rho}^{i} * {\rho}^{j} (x), \qquad \forall (i,j) \in [\![1,N]\!]^2.
\eeq
Given this decomposition, assuming in addition that the matrix $A$ is of rank N  allows to obtain compactness for the sequence $(u^i_\eps *\rho^i_\eps)_{\eps>0}$ for all $i\in [\![1,N]\!]$ and pass to the limit in the equations. While the assumption \eqref{hyp:Into_3} is stronger than the assumption \eqref{hyp:Into_2}, it allows us to obtain the limit in the case at hand,  where there is no diffusion in the system. Additionally, considering that the functions $\rho^i$ have a bounded moment of order $\f{d+2}{2}$ permit to extend the result to non-compactly supported potentials. Finally, in Sec.~\ref{num}, we illustrate the localisation limit in 2D thanks to numerical simulations.

Note that  we assume throughout $d\geq 2,$ and let the simpler case $d=1$ to the reader.

\section{The one-species case} \label{single}

In order to explain the ideas needed for systems, we first focus on the simpler one-species case. With the assumption \eqref{hyp:Into_1}, Eq.~\eqref{eq:Intro_single} can be rewritten as
\beq
\p_t u_\eps - \dv [ u_\eps \nabla \check{\rho}_\eps*{\rho}_\eps * u_\eps ]=0 , \qquad t \geq 0 , \, x \in \R^d,
\label{eq:convol}
\eeq
where the convolution kernel $\rho_\ep$ satisfies 
\beq \label{as:rho}
\rho_\eps(x)= \frac{1}{\eps^d} \rho(\frac x \eps), \quad  \rho(x) \geq 0,  \quad \int_{ \R^d} \rho(x) \diff x =1.
\eeq
We are interested in the localisation limit of this equation, {i.e.}, the case when $\ep\to 0.$ We aim to establish that when $\ep\to 0$, the solution of \eqref{eq:convol} converges toward a solution of 
\beq
\p_t u_0 - \dv [ u_0 \nabla  u_0 ]=0 , \qquad t \geq 0 , \, x \in \R^d .
\label{eq:limit}
\eeq
Existence, uniqueness as well as the localisation limit have already been stated, and the proofs sketched, in~\cite{LMG}; we revisit this note here and provide full details for these results. To avoid technicalities and keep the proofs simple, we moreover restrict ourselves to compactly supported kernels;
this  may be relaxed by assuming $\int |x|^{\f{d+2}{2}} \rho(x)\diff x <\infty,$ see Sec.~\ref{sec:multi}.
We assume that the initial data satisfies
 \beq  \label{as:init}
{u_\eps}^0  \geq 0, \qquad \int_{ \R^d} (1+ u_\eps^0+ |x|^2+ | \ln u_\eps^0 | ) u_\eps^0(x) \diff x \leq C,
\eeq
for some constant $C$ independent of $0<\eps \leq 1$.

\begin{theorem}
For $\ep\in (0,1),$ let $\rho_\ep$ be defined by~\eqref{as:rho} and $R>0$ such that $Supp(\rho)\subset B(0,R)$.  Assuming that ${u_\eps}^0$ verifies \eqref{as:init}, letting $(u_\eps)_{\eps}$ be a weak solution of \eqref{eq:convol}, then there exists a converging subsequence (that is not relabelled) such that when $\ep \to 0,$ we have
\begin{align}
\rho_\eps * u_\eps & \rightarrow u_0 \quad \mbox{ strongly in } \quad L^p(0,T;L^
q_{\mbox{\scriptsize{loc}}}(\R^d)) \quad \forall \; 1\leq p < 2, \; 1\leq q < \f{2d}{d-2},
 \label{s1:lim_L1strong}\\ 
 u_\eps & \rightharpoonup  u_0 \quad \mbox{ weakly in } \quad L^1_{\mbox{\scriptsize{loc}}}((0,T)\times\R^d), \label{s1:lim_Cinfweaku}
\end{align}
and $u_0$ is a solution of \eqref{eq:limit} in the distributional sense.
\end{theorem}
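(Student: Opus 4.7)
The key identity comes from the autoconvolution structure $K_\ep = \check{\rho}_\ep * \rho_\ep$. Testing~\eqref{eq:convol} against $1 + \ln u_\ep$ and using the Fourier identity $\widehat{K_\ep} = |\widehat{\rho_\ep}|^2$ yields
\[
\f{d}{dt} \int_{\R^d} u_\ep \ln u_\ep \, \diff x + \int_{\R^d} |\nabla v_\ep|^2 \, \diff x = 0,
\qquad v_\ep := \rho_\ep * u_\ep.
\]
Crucially, the dissipation controls $v_\ep$, not $u_\ep$. Combined with mass conservation and a uniform second-moment bound (obtained by testing against $|x|^2$ and absorbing the resulting term via the same commutator splitting as in Step~3 below), assumption~\eqref{as:init} supplies $u_\ep \in L^\infty(0,T; L^1 \cap L\log L)$ with uniform tightness, $\nabla v_\ep \in L^2((0,T)\times \R^d)$, and by Sobolev embedding, $v_\ep \in L^2(0,T; L^{2d/(d-2)}_{\mbox{\scriptsize{loc}}})$.

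\textbf{Step 2 -- compactness of $v_\ep$.}
Convolving~\eqref{eq:convol} with $\rho_\ep$ and testing against $\phi \in C^\infty_c$ gives
\[
\int_{\R^d} \p_t v_\ep \, \phi \, \diff x = -\int_{\R^d} u_\ep \, (\check{\rho}_\ep * \nabla v_\ep) \cdot (\check{\rho}_\ep * \nabla\phi) \, \diff x.
\]
Since $\|\check{\rho}_\ep * \nabla \phi\|_\infty \le \|\nabla\phi\|_\infty$ and $\|\check{\rho}_\ep * \nabla v_\ep\|_{L^2_x} \le \|\nabla v_\ep\|_{L^2_x}$, H\"older combined with a Gagliardo-Nirenberg interpolation on $u_\ep$ yields $\p_t v_\ep$ uniformly bounded in $L^1(0,T; W^{-1,r}_{\mbox{\scriptsize{loc}}})$ for some $r>1$. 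Aubin-Lions then gives strong convergence of $v_\ep$ in $L^2(0,T; L^2_{\mbox{\scriptsize{loc}}})$, and interpolating with the $L^2_t L^{2d/(d-2)}_{\mbox{\scriptsize{loc}}}$ bound delivers~\eqref{s1:lim_L1strong}. I expect this to be the main technical obstacle: without diffusion on $u_\ep$, closing the negative-Sobolev estimate on $\p_t v_\ep$ relies essentially on the autoconvolution structure (which supplies dissipation for $v_\ep$ rather than $u_\ep$) together with the Sobolev interpolation needed to absorb the $u_\ep$ factor in the flux.

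\textbf{Step 3 -- identification of the limits and passage in the weak formulation.}
The entropy and moment bounds yield weak $L^1_{\mbox{\scriptsize{loc}}}$ compactness of $u_\ep$ by Dunford-Pettis, hence~\eqref{s1:lim_Cinfweaku}. Writing $\int (v_\ep - u_\ep)\phi = \int u_\ep(\check{\rho}_\ep * \phi - \phi)$ and using $\|\check{\rho}_\ep * \phi - \phi\|_\infty = O(\ep)$ with tightness of $u_\ep$ shows that $v_\ep$ and $u_\ep$ share the same limit $u_0$. For the nonlinear flux, the symmetry of convolution gives
\[
\int_0^T \!\! \int_{\R^d} u_\ep \, \nabla K_\ep * u_\ep \cdot \nabla\phi \, \diff x \, \diff t
= \int_0^T \!\! \int_{\R^d} \big(\rho_\ep * (u_\ep \nabla\phi)\big) \cdot \nabla v_\ep \, \diff x \, \diff t,
\]
and the commutator splitting $\rho_\ep * (u_\ep \nabla\phi) = v_\ep \, \nabla\phi + R_\ep$ with $\|R_\ep\|_{L^1_{t,x}} \le C \ep \, \|\nabla^2\phi\|_\infty \, \|u_\ep\|_{L^\infty_t L^1_x}$. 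The leading term equals $-\f12 \int_0^T \!\! \int v_\ep^2 \, \Delta\phi$, which converges to $-\f12 \int_0^T \!\! \int u_0^2 \, \Delta\phi$ by the strong $L^2_{\mbox{\scriptsize{loc}}}$ convergence of $v_\ep$. This is precisely the weak form of~\eqref{eq:limit}.
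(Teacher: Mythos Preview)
Your overall architecture matches the paper's (entropy dissipation $\Rightarrow$ $\nabla v_\ep\in L^2$, compactness on $v_\ep$, commutator argument for the nonlinear flux), but there is one genuine gap and one smaller slip.

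\textbf{Missing estimate (Step~1/2).} You never test \eqref{eq:convol} against $K_\ep*u_\ep=\check\rho_\ep*v_\ep$. That computation gives
\[
\f12\f{\diff}{\diff t}\int_{\R^d}|v_\ep|^2\,\diff x+\int_{\R^d}u_\ep\,|\nabla K_\ep*u_\ep|^2\,\diff x=0,
\]
hence $v_\ep\in L^\infty_tL^2_x$ and, crucially, $\sqrt{u_\ep}\,\nabla K_\ep*u_\ep\in L^2_{t,x}$. The paper uses the second bound twice: first to close the second-moment estimate directly via Cauchy--Schwarz (avoiding the circularity in your route, where the moment needs $\|v_\ep\|_{L^2}$ which in turn needs the entropy lower bound which needs the moment), and second to control the flux $Q_\ep:=u_\ep\nabla K_\ep*u_\ep$ in $L^1_{t,x}$ by
\[
\int |Q_\ep|\le\Big(\int u_\ep\Big)^{1/2}\Big(\int u_\ep|\nabla K_\ep*u_\ep|^2\Big)^{1/2}.
\]
This is exactly what gives $\p_t v_\ep=\dv(\rho_\ep*Q_\ep)$ bounded in $L^1(0,T;W^{-1,1})$, after which the paper runs Kolmogorov--Riesz--Fr\'echet (a mollifier-in-$x$ trick to trade space regularity for the time increment). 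Your Step~2 instead invokes a ``Gagliardo--Nirenberg interpolation on $u_\ep$'' to bound the flux, but there is no gradient control on $u_\ep$ itself---only on $v_\ep$---so GN is unavailable, and with merely $u_\ep\in L^\infty_t(L^1\cap L\log L)$ and $\check\rho_\ep*\nabla v_\ep\in L^2_{t,x}$ the product cannot be placed in any $L^1_tL^r_x$. The $L^2$ energy dissipation is not optional here.

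\textbf{Commutator norm (Step~3).} The bound $\|R_\ep\|_{L^1_{t,x}}\le C\ep$ is too weak to pair with $\nabla v_\ep\in L^2$. What the pointwise inequality $|R_\ep(x)|\le \ep R\,\|\nabla^2\phi\|_\infty\,v_\ep(x)$ (from compact support of $\rho$) actually gives---and what the paper uses---is $\|R_\ep\|_{L^2_{t,x}}\le C\ep\|v_\ep\|_{L^2_{t,x}}$, so that $\int R_\ep\cdot\nabla v_\ep\to 0$ by Cauchy--Schwarz. The paper also keeps the main term as $\int \Phi\,v_\ep\cdot\nabla v_\ep$ and passes to the limit by strong--weak convergence, rather than integrating by parts to $-\f12\int v_\ep^2\,\Delta\phi$; both work once $v_\ep\to u_0$ strongly in $L^2_{\mathrm{loc}}$.
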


 In Sec.~\ref{ss1.1} we present a priori estimates, in Sec.~\ref{ss1.2} we show compactness of $(\rho_\eps * u_\eps)_\eps$, and finally in Sec.~\ref{ss1.3} we show the convergence in Eq. \eqref{eq:convol}.

 \begin{remark}
     We notice that for any functions $f$ and $g$ we have the following - frequently used - identity
\begin{equation}\label{prop:convol}\int(\rho * f)(x) g(x)\diff x=\iint \rho(x-y) f(y) g(x)\diff x \diff y = \iint\check{\rho}(y-x) g(x) \diff x f(y) \diff y=\int(\check{\rho}*g)(y) f(y)\diff y.
\end{equation}
 \end{remark}

\subsection{A priori estimates} \label{ss1.1}

\begin{proposition} \label{props1:ape}
For $\ep\in (0,1),$ we assume \eqref{as:rho} and \eqref{as:init}. Let $u_\eps$ be solutions of \eqref{eq:convol}, then for all $t \in [0,T]$, $u_\eps \geq 0$ and, for some constants $C_1(T)$, $C_2(T)$ independent of $\ep$
$$ 
\int_{\R^d} |x|^2 u_\eps (t) \leq C_1 (T) \qquad \mbox{and} \qquad \int_{\R^d} u_\eps(t) \,|\ln \; u_\eps (t)|  \leq C_2 (T) .
$$
Moreover, we have the following estimates, uniform with respect to $\ep$,
\begin{align}
& u_\eps \in L^\infty(0,T;L^1(\R^d)) \label{s1:uL1}, \\
& \rho_\eps*u_\eps \in L^\infty(0,T;L^1(\R^d))  \label{s1:L1}, \\ 
& \rho_\eps*u_\eps \in L^2(0,T;H^1(\R^d))   \label{s1:H1} ,\\ 
& \rho_\eps*u_\eps \in L^2(0,T;L^{\f{2d}{d-2}}(\R^d)), \qquad {\text{for }}d\geq 3   \label{s1:Lp} ,\\ 
& \sqrt{u_\eps} \nabla K_\eps*u_\eps  \in L^2((0,T) \times\R^d ). \label{s1:entropy1}
\end{align}
For $d=2,$ the estimates are the same except~\eqref{s1:Lp} which is replaced by $\rho_\ep * u_\ep \in L^2(0,T;L^{p}(\R^d))$ for any $1\leq p<\infty,$ see also Remarks~\ref{remark4} and~\ref{remark5}.
\end{proposition}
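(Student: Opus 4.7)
The plan is to build on two distinct dissipation principles, both of which exploit the auto-convolution factorisation $K_\eps = \check{\rho}_\eps * \rho_\eps$: a quadratic one giving \eqref{s1:entropy1} and the auxiliary bound $\rho_\eps * u_\eps \in L^\infty(0,T;L^2)$, and the Boltzmann entropy giving \eqref{s1:H1}. Positivity of $u_\eps$ I would obtain from the transport structure $\p_t u_\eps + b_\eps \cdot \nabla u_\eps = u_\eps \, \Delta K_\eps * u_\eps$ with smooth drift $b_\eps = -\nabla K_\eps * u_\eps$, via the method of characteristics applied to a regularised problem. Mass conservation follows by integrating \eqref{eq:convol}, which yields \eqref{s1:uL1}; Young's inequality (with $\|\rho_\eps\|_{L^1}=1$) then gives \eqref{s1:L1}.

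For \eqref{s1:entropy1} I would test \eqref{eq:convol} against $K_\eps * u_\eps$. Using evenness of $K_\eps$ together with identity \eqref{prop:convol}, an integration by parts reorganises $\int (K_\eps*u_\eps)\p_t u_\eps$ into $-\int u_\eps|\nabla K_\eps*u_\eps|^2$, while the left-hand side equals $\tfrac{d}{dt}\tfrac{1}{2}\|\rho_\eps*u_\eps\|_{L^2}^2$ since $\int u_\eps(\check\rho_\eps*\rho_\eps*u_\eps)=\|\rho_\eps*u_\eps\|_{L^2}^2$. Since \eqref{as:init} controls $\int(u_\eps^0)^2$, Young gives $\|\rho_\eps * u_\eps^0\|_{L^2}\leq C$ uniformly in $\eps$, so the resulting dissipation inequality delivers both \eqref{s1:entropy1} and a uniform $L^\infty(0,T;L^2)$ bound on $\rho_\eps*u_\eps$. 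The estimate \eqref{s1:H1} is the central qualitative gain: testing against $\ln u_\eps$ yields
$$ \frac{d}{dt}\int u_\eps\ln u_\eps\diff x = -\int \nabla u_\eps\cdot \nabla K_\eps * u_\eps\diff x , $$
and applying \eqref{prop:convol} to move one factor $\check\rho_\eps$ from the convolution onto $\nabla u_\eps$ converts the right-hand side into $-\int|\nabla(\rho_\eps*u_\eps)|^2$, a genuinely negative-definite term.

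For the second moment I would differentiate $\int|x|^2 u_\eps$ and bound it via Cauchy-Schwarz by $2(\int|x|^2 u_\eps)^{1/2}(\int u_\eps|\nabla K_\eps*u_\eps|^2)^{1/2}$; a Grönwall-type argument combined with \eqref{s1:entropy1} then produces a bound independent of $\eps$. The control of $\int u_\eps|\ln u_\eps|$ follows by the usual Carleman-type splitting $\int u|\ln u|=\int u\ln u + 2\int u(\ln u)_-$ together with $\int u(\ln u)_-\leq C\bigl(1+\int|x|^2 u\bigr)$, which I would prove by dividing according to whether $u\geq e^{-|x|^2}$ or not. Finally, \eqref{s1:Lp} follows by combining the $L^\infty(0,T;L^2)$ bound with \eqref{s1:H1} and applying the Sobolev embedding $H^1(\R^d)\hookrightarrow L^{2d/(d-2)}(\R^d)$ pointwise in time; for $d=2$, Gagliardo-Nirenberg interpolation yields any $p<\infty$. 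The main obstacle is the algebraic step that recasts the a priori indefinite term $\int\nabla u_\eps\cdot\nabla K_\eps*u_\eps$ as a square; this step would fail without the auto-convolution hypothesis \eqref{hyp:Into_1}, and its extension to the multispecies setting is precisely what motivates the full-rank assumption on $A$ in Section~\ref{sec:multi}.
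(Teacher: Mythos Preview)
Your proposal is correct and follows essentially the same route as the paper: test against $K_\eps * u_\eps$ for the quadratic dissipation \eqref{s1:entropy1} and the $L^\infty_t L^2_x$ bound, test against $\ln u_\eps$ and exploit the auto-convolution structure to obtain $-\|\nabla(\rho_\eps * u_\eps)\|_{L^2}^2$ for \eqref{s1:H1}, control the second moment via Cauchy--Schwarz and a square-root Gr\"onwall argument, and split $\int u(\ln u)_-$ according to $\{u\gtrless e^{-|x|^2}\}$. The only deviation is the positivity step: the paper multiplies by $-\mathbb{1}_{\{u_\eps\leq 0\}}$ and integrates (the Stampacchia-type argument) rather than invoking characteristics on a regularised problem, which is somewhat more direct in this divergence-form setting.
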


\begin{proof}

    \noindent {\em Step 1. Positivity, $L^1$ bound and $L^2$ bound.} 
Let us first prove that $u_\ep \geq 0:$ we multiply the equation by $-\1_{u_\ep \geq 0}$ to get
\[\p_t |u_\ep |_- = \dv [|u_\ep |_- \nabla_x (\check{\rho}_\ep * \rho_\ep * u_\ep)] ,
\]
so that integrating in space we get
\[ \f{\diff}{\diff t} \dst \int_{\R^d} |u_\ep |_- \diff x = 0,
\]
hence $|u_\ep(t,\cdot)|_-=|u_\ep (0,\cdot)|=0.$ Integrating the equation in space we get that $\int u_\ep (t,x) \diff x=\int u_\ep ^0 (x) \diff x$ which gives \eqref{s1:L1} and \eqref{s1:uL1}. In addition, we can remark that
\[  \dst \f{1}{2}\f{\diff}{\diff t} \dst \int_{\R^d} |\rho_\eps * u_\ep (t,x)|^2 \diff x = \dst \int_{\R^d} (\p_t u_\ep) (\check{\rho}_\ep*\rho_\eps * u_\eps)  \diff x . \]
Thus multiplying~\eqref{eq:convol} by $\check{\rho}_\ep *\rho_\eps*u_\ep$ and integrating in space, the above equation gives
\[\begin{array}{ll}
\dst \f{1}{2}\f{\diff}{\diff t} \dst \int_{\R^d} |\rho_\eps * u_\ep (t,x)|^2 \diff x 
&= \dst \int_{\R^d} (\check{\rho}_\eps *\rho_\eps * u_\eps)\dv [ u_\eps \nabla \check{\rho}_\eps*\rho_\eps * u_\eps ] \diff x 
\\
&= - \dst \int_{\R^d}  \nabla (\check{\rho}_\eps*\rho_\eps * u_\eps) \cdot \left(u_\eps \nabla(\check{\rho}_\eps *\rho_\eps * u_\eps)\right) \diff x
\\
&=- \dst \int_{ \R^d}  u_\eps | \nabla  \check{\rho}_\eps*\rho_\eps * u_\eps|^2 \diff x, 
\end{array}\]
using~\eqref{prop:convol} and  the Green's formula. After integrating in time we have
\[
\f{1}{2} \int_{\R^d} |\rho_\eps * u_\ep (t,x)|^2 \diff x + \int_0^T \int_{ \R^d}  u_\eps | \nabla  \check{\rho}_\eps*\rho_\eps * u_\eps|^2 \diff x \leq \f{1}{2} \int_{\R^d} |\rho_\eps * u^0_\ep (x)|^2 \diff x \leq \f{1}{2} \int_{\R^d} |u^0_\ep (x)|^2 \diff x.
\]
 Hence the estimate~\eqref{s1:entropy1} is verified and $\rho_\eps * u_\ep \in L^\infty(0,T;L^2(\R^d))$.

\medskip
\noindent {\em Step 2. Second moment control.} 
We integrate the equation multiplied by the weight $|x|^2$ and find
\[\begin{array}{ll}
\dst\f{\diff }{\diff t} \dst \int_{\R^d} |x|^2 u_\eps \diff x&=-\dst\int_{\R^d} 2 [ u_\eps x\cdot \nabla \check{\rho}_\eps*\rho_\eps * u_\eps ]\diff x
\\
&\leq 2 \left(\dst\int_{\R^d}  |x|^2  u_\eps (x) \diff x \right)^{1/2} \left(\dst\int_{\R^d} u_\ep (x) |\nabla \check{\rho}_\eps*\rho_\eps * u_\eps |^2\diff x\right)^{1/2} .
\end{array}
\]
Since $W(t):= \dst\int_{\R^d}  |x|^2  u_\eps (x) \diff x $ remains positive, we may write 
\[
\f{\diff }{\diff t} W^{1/2} = \f 1 {2  W^{1/2}}\f{\diff }{\diff t} W \leq \left(\dst\int_{\R^d} u_\ep (x) |\nabla \check{\rho}_\eps*\rho_\eps * u_\eps |^2\diff x\right)^{1/2}.
\]
After integration in time we conclude
\[
 W^{1/2}(t) \leq  W^{1/2} (0)+ \int_0^t \left(\dst\int_{\R^d} u_\ep (x) |\nabla \check{\rho}_\eps*\rho_\eps * u_\eps |^2\diff x\right)^{1/2} \diff s.
\]
Applying the Cauchy-Schwarz inequality to the second term of the right-hand side and taking the square, we finally obtain
\[\dst \int_{\R^d} |x|^2 u_\eps (t,x) \diff x \leq 2 \dst \int_{\R^d} |x|^2 u_\eps (0,x) \diff x +2 t \dst\int_0^t  \int_{\R^d} u_\ep (s,x) |\nabla \check{\rho}_\eps*\rho_\eps * u_\eps |^2 (s,x) \diff x \diff s,
\]
which is finite thanks to Step~1.

\medskip
{\em Step 3. Lower bound for the entropy.} 
Let us decompose $\int_{ \R^d} u_\eps |\ln (u_\eps)|_- \diff x$ as follows:
\[\dst\int_{ \R^d} u_\eps |\ln (u_\eps)|_- \diff x=\dst \int_{ \R^d} u_\eps |\ln (u_\eps)|_- \1_{u_\ep \geq e^{-|x|^2}} \diff x + \dst\int_{ \R^d} u_\eps |\ln (u_\eps)|_- \1_{u_\ep \leq e^{-|x|^2}}\diff x=A+B
\]
We then bound each term, noticing first that if $u_\ep \geq e^{-|x|^2}$ then $ |\ln (u_\eps)|_- =-\ln (u_\eps) \1_{u_\ep \leq 1} \leq |x|^2:$
\[
A=\dst\int_{ \R^d} u_\eps |\ln (u_\eps)|_- \1_{u_\ep \geq e^{-|x|^2}} \diff x
\leq 
\dst\int_{ \R^d} |x|^2 u_\eps \diff x <\infty,
 \]
 and, noticing now that if $x\geq 1$ and if $y \leq e^{-|x|^2}$ then $ y |\ln (y)| \leq e^{-|x|^2} |\ln (e^{-|x|^2})|$ because $s\mapsto s|\ln(s)|$ is increasing on $(0,e^{-1})$ (with a maximum on $e^{-1}$)
 \[B=\dst \int_{ \R^d} u_\eps |\ln (u_\eps)|_- \1_{u_\ep \leq e^{-|x|^2}}\diff x
  \leq \dst \int_{|x|\leq 1} u_\eps |\ln (u_\eps)|_- \1_{u_\ep \leq 1}\diff x
  +
  \dst \int_{ |x|\geq 1} |x|^2 e^{-|x|^2} \diff x <\infty.
 \]

 \medskip
{\em Step 4. Space compactness .} 
We consider the classical entropy $ \int_{ \R^d} u_\eps \ln (u_\eps)$ and compute its derivative
\[\begin{array}{lll}
\dst \f{\diff }{\diff t} \dst \int_{\R^d} u_\ep \ln ( u_\ep)  \diff x&=&  \dst \int_{\R^d} \left(1+\ln(u_\ep)\right)\dv [ u_\eps \nabla \check{\rho}_\eps*\rho_\eps * u_\eps ] \diff x 
\\
&=& - \dst \int_{\R^d} \nabla \left(1+\ln(u_\ep)\right)\cdot [ u_\eps \nabla \check{\rho}_\eps*\rho_\eps * u_\eps ] \diff x 
\\
&=& -\dst \int_{\R^d} \nabla u_\ep \cdot  \nabla (\check{\rho}_\eps*\rho_\eps * u_\eps ) \diff x 
\\
&=& - \dst \int_{\R^d} (\nabla u_\ep *\rho_\ep )\cdot  \nabla (\rho_\eps * u_\eps ) \diff x 
\\
&=&-\dst \int_{\R^d} |\nabla u_\ep *\rho_\ep |^2 \diff x. 
\end{array}
\]
Then integrating in time, we find
\[  \int_{ \R^d} u_\eps |\ln (u_\eps)| +    \int_0^T \int_{ \R^d} | \nabla \rho_\eps*u_\eps |^2 \diff x \diff t \leq \int_{ \R^d} u_\eps^0 \ln (u_\eps^0) +2  \int_{ \R^d} u_\eps |\ln (u_\eps)|_-,
\]
which gives the entropy bound and estimate \eqref{s1:H1}. Additionally, thanks to  the Gargliano-Nirenberg-Sobolev interpolation inequality, see \cite{LiebLoss}, we have for $\theta \in [0,1]$ and $p$ such that $\f{1}{p} = \theta (\f{1}{2}-\f{1}{d}) + \f{1-\theta}{q}$ with $q=2:$
\[
\Vert \rho_\ep * u_\ep \Vert_{L^p(\R^d)} \leq C \Vert \nabla \rho_\ep * u_\ep \Vert_{L^2(\R^d)}^\theta \Vert \rho_\ep * u_\ep \Vert_{L^q(\R^d)}^{1-\theta}.
\]
Then if $d\neq2$, for $\theta=1$ we have
\[
\| \rho_\eps*u_\eps \|_{L^2(0,T;L^{\f{2d}{d-2}}(\R^d))} \leq \| \nabla \rho_\eps*u_\eps \|_{L^2(0,T;L^2(\R^d))}
\]
which shows the estimate~\eqref{s1:Lp}. This concludes the proof of Proposition~\ref{props1:ape}.

\end{proof}

\begin{remark}\label{remark4}
Taking $\theta<1,$ we get $\rho_\ep * u_\ep \in L^{\f{2}{\theta}}_t(L^p_x)$ with $\f{1}{p}=\theta(\f{1}{2}-\f{1}{d})+1-\theta,$ hence $\rho_\ep *u_\ep \in L^p((0,T)\times \R^d)$ for $p=2+\f{2}{d}.$
\end{remark}

\begin{remark}\label{remark5}
For $d=2$ we take $\theta <1$ and $q=1$ hence $p=\f{1}{1-\theta} >1.$ We get $\rho_\ep * u_\ep \in L^{\f{2}{\theta}}(0,T; L^p(\R^d))$. For $\theta = 2/3$ we have $\rho_\ep * u_\ep \in L^3_t(L^3_x).$
\end{remark}

\subsection{Compactness} \label{ss1.2}

Our next step is to prove space and time compactness. We first remark that, thanks to the bounds \eqref{s1:L1} and \eqref{s1:H1}, we have
\beq \label{s1:L1loc}
\rho_\eps * u_\eps \in L^\infty(0,T;L^1_{\mbox{\scriptsize{loc}}}(\R^d)), \quad \nabla \rho_\eps * u_\eps \in L^1_{\mbox{\scriptsize{loc}} }((0,\infty)\times\R^d), 
\eeq
which provides us with space compactness. To pass to the limit $\eps \rightarrow 0$ we need to obtain some compactness in time. To this aim, we compute the equation of $\rho_\eps * u_\eps$,
\begin{equation}\label{eq:rho*u}
\p_t \rho_\eps* u_\eps = \dv [\underbrace{ \rho_\eps* u_\eps \nabla (\check{\rho}_\eps*{\rho}_\eps * u_\eps)}_{:=Q_\eps(t,x)}],
\end{equation}
with  $Q_\eps(t,x) = \int \rho_\eps (x-y) u_\eps (y) \nabla \check{\rho}_\eps *  {\rho}_\eps * u_\eps (t,y) \diff y $ and
\[\begin{aligned}
\|  Q_\eps(t,x)  \|_{L^1(\R^d)}  &\leq  {  \|  \rho_\eps  \|_{L^1(\R^d)}}   \int \  u_\eps (y) | \nabla \check{\rho}_\eps *  {\rho}_\eps * u_\eps (t,y) | \diff y
\\ &
 \leq  \frac{1}{2} \Big( \|   \sqrt{ u_\eps } \|^2_{ L^2(\R^d)} + \|  \sqrt{ u_\eps } | \nabla \check{\rho}_\eps *  {\rho}_\eps * u_\eps (t,y) |  \|^2_{ L^2(\R^d)} \Big). 
\end{aligned}
 \]
Thus
 \[  \|  Q_\eps(t,x)  \|_{L^1(0,T;L^1(\R^d))} \leq  \frac{1}{2} \Big( T \|  u_\eps \|^2_{L^\infty(0,T;L^2(\R^d))} + \|  \sqrt{ u_\eps } | \nabla \check{\rho}_\eps *  {\rho}_\eps * u_\eps (t,y) |  \|^2_{ L^2(0,T;L^1(\R^d))} \Big)  \leq CT. \]

Let us show that $\rho_\ep * u_\ep$ is compact in $L^1_{{\text{loc}}}((0,T)\times \R^d).$ We first have, for fixed $t$, and on a compact $K\subset \R^d,$ thanks to the inequality~\eqref{s1:H1} and to the Taylor-Lagrange inequality
 \[\Vert \rho_\ep * u_\ep (t,x+h) - \rho_\ep * u_\ep (t,x) \Vert_{L^1((0,T)\times K)} \leq C(K,T)h \Vert \nabla u_\ep * \rho_\ep \Vert_{L^2((0,T)\times K)} \leq C(K,T)h.
 \] 
 Then we take a mollifier sequence $\omega_\eta(x)=\f{1}{\eta^d}\omega(\f{x}{\eta})$ and write, for $v=u_\ep * \rho_\ep,$
 \begin{align*}\Vert v(t+k,x)-v(t,x)\Vert_{L^1((0,T)\times K)} &\leq  & \Vert (v-v*\omega_\eta)(t+k,x) - (v-v*\omega_\eta)(t,x)\Vert_{L^1((0,T)\times K)} 
 \\
 &&
 +  \Vert v*\omega_\eta (t+k,x)-v*\omega_\eta (t,x)\Vert_{L^1((0,T)\times K)}. 
 \end{align*}
 We evaluate the second term on the right-hand side using~\eqref{eq:rho*u}
 \begin{equation*}
 \begin{array}{lll}
 \Vert v*\omega_\eta (t+k,x)-v*\omega_\eta (t,x)\Vert_{ L^1((0,T)\times K)} &=&  \Vert  \int_t^{t+k}  \p_t v*\omega_\eta (t+s,x)\diff s\Vert_{L^1((0,T)\times K)} 
 \\
&=&  \Vert  \int_t^{t+k}  \dv (Q_\ep) *\omega_\eta (t+s,x)\diff s\Vert_{L^1((0,T)\times K)} 
\\
&=&\Vert  \int_t^{t+k}  Q_\ep *\dv(\omega_\eta) (t+s,x)\diff s\Vert_{L^1((0,T)\times K)} 
\\
&\leq& \int_t^{t+k}  \Vert Q_\ep *\dv(\omega_\eta) (t+s,x)\Vert_{L^1((0,T)\times K)} \diff s
\\
&\leq& \int_t^{t+k}  \f{1}{\eta} \Vert Q_\ep (t+s,\cdot) \Vert_{L^1((0,T)\times K)} \Vert \omega (t+s,\cdot)\Vert_{W^{1,1}((0,T)\times K)} \diff s
\\
&\leq& C(K) \f{k}{\eta} 
 \end{array}
 \end{equation*}
 For the first term of the right-hand side, we apply the Cauchy-Schwarz inequality, use~\eqref{s1:H1} and a classical convolution inequality:
  \[
 \Vert (v-v*\omega_\eta)(t+k,x) - (v-v*\omega_\eta)(t,x)\Vert_{L^1((0,T)\times K)} \diff t
\leq 2 \eta C(\alpha) \sqrt{|K|(T+k)}\Vert \nabla v \Vert_{L^2((0,T+k)\times K)} \leq C(K,T)\eta 
  \]
  so finally choosing for instance $\eta=\sqrt{k}$ we have satisfied the assumptions of the Weil-Kolmogorov-Frechet theorem on $L^1((0,T)\times K),$ hence the sequence $\rho_\ep * u_\ep$ is compact in this space.

\subsection{Convergence} \label{ss1.3}

Thanks to the compactness obtained in the previous section we have for a subsequence \[\begin{aligned}
\rho_\eps * u_\eps & \rightarrow u_0 \quad \mbox{ strongly in } \quad L^1_{\text{loc}}((0,T)\times\R^d), 
\\ 
\nabla \rho_\eps * u_\eps & \rightharpoonup  \nabla u_0 \quad \mbox{ weakly in } \quad L^1_{\text{loc}}((0,T)\times\R^d),
\\ 
\p_t  \rho_\eps * u_\eps & \rightharpoonup  \p_t  u_0 \quad \mbox{ weakly in } \quad L^1_{\text{loc}}((0,T)\times\R^d).
\end{aligned}
 \]

  In addition, from the uniform bound  $\rho_\eps*u_\eps \in L^2(0,T;L^{\f{2d}{d-2}}(\R^d))$ thanks to estimates \eqref{s1:Lp}, and the strong limit \eqref{s1:lim_L1strong}, we have
 \beq \label{s1:lim_Lpstrong}
 \rho_\eps * u_\eps \rightarrow u_0 \quad \mbox{ strongly in } \quad L^p(0,T;L^
q_{\mbox{\scriptsize{loc}}}(\R^d)) \quad \forall 1\leq p < 2, \; 1\leq q < \f{2d}{d-2}.
 \eeq
 
It remains to pass to the limit in the equation. Let  $\phi \in C_c^\infty([0,T]\times\R^d)$, then multiplying Eq.~\eqref{eq:convol} by $\phi$ and integrating by parts, we have 
\[
 \int_0^T \int_{\R^d} \p_t u_\eps \phi =  \int_0^T \int_{\R^d} \dv [ u_\eps \nabla \check{\rho}_\eps*{\rho}_\eps * u_\eps ] \phi,
\]
thus
\[
 \int_0^T \int_{\R^d}  u_\eps \p_t \phi =  \int_0^T \int_{\R^d}  u_\eps \nabla \check{\rho}_\eps*{\rho}_\eps * u_\eps  \Phi.
\]
where we have used the notation $\Phi=\nabla \phi$. The convergence is immediate for the time derivative but more delicate for the nonlinear drift term. We decompose: 
\[ \begin{aligned}
 \int_{ \R^d}  \Phi(t,x) u_\eps \nabla (\check{\rho}_\eps*\rho_\eps * u_\eps) &= \int_{ \R^d}  \rho_\eps* [\Phi(t,x) u_\eps] . \nabla (\rho_\eps * u_\eps)
\\
&=  \int_{ \R^d}  [\Phi (t,x) \,  \rho_\eps* u_\eps +r_\eps].  \nabla (\rho_\eps * u_\eps)
\end{aligned}
\]
 with \[
r_\eps (t,x)=  \int_{ \R^d}   [\Phi(t,y) - \Phi(t,x)] u_\eps(t,y) \rho_\eps (x-y) \diff y .
\]
 When we integrate in time, the first term passes to the limit because of strong-weak limits, and we want to prove that the second term vanishes. For $L_{ \Phi}$ the 1-Lipschitz norm of $\Phi,$ we write
\[
| r_\eps (t,x) | \leq L_\Phi \int_{ \R^d}   |x-y|  \rho_\eps (x-y) u_\eps(t,y) \diff y \leq \eps \; L_\Phi  \; \rho_\eps * u_\eps,
\]
since $\rho_\eps$ is compactly supported. We apply the Cauchy-Schwarz inequality
 \[ 
 \int_0^T \int_{\R^d} | r_\eps (t,x) | |\nabla (\rho_\eps * u_\eps)| \diff t \diff x \leq \eps \; L_\Phi  \; \| \rho_\eps * u_\eps \|_{L^2(0,T;L^2(\R^d))}  \| \nabla (\rho_\eps * u_\eps) \|_{L^2(0,T;L^2(\R^d))} \rightarrow 0.
\]

Hence at the limit $u_0$ is solution of \eqref{eq:limit}.

\section{The case of systems}\label{sec:multi}
%

 We may now extend the above proof  to a system of $N$ non-local equations \eqref{eq:Intro_multiple}, that is
\beq
\p_t u^{i}_\eps -  \dv [ u^{i}_\eps \sum_{j=1}^N \nabla K_\eps^{ij}* u^{j}_\eps ]=0 , \qquad t \geq 0 , \, x \in \R^d,  \, \qquad i \in [\![1,N]\!],
\label{sys:convol}
\eeq 
where $K_\eps^{ij}$ are kernel functions which satisfy
\[
K_\eps^{ij} (x) = \frac{1}{\eps^d} K^{ij} (\f x \eps), \qquad \gamma^{ij} := \int_{\R^d} K^{ij}(x) \diff x = \int_{\R^d} K_\eps^{ij}(x) \diff x < +\infty .
\] 
This models the evolution of a multi-species population, where each species $i$ can sense another species~$j$ through the non-local operator $K^{ij}$. The aim of this section is to establish that its limit  as $ \eps \rightarrow 0 $ is the system of porous media equations
\beq
\p_t u^{i}_0 -  \sum_{j=1}^N\dv [\gamma^{ij} u^{i}_0 \nabla u^{j}_0 ]=0 , \qquad t \geq 0 , \, x \in \R^d, \, \qquad i \in [\![1,N]\!].
\label{sys:convol_limit}
\eeq
This limit has been studied in the case $N=2$ in \cite{burger2022porous} using Wasserstein metrics related methods, under the assumption that $ \min \{\gamma^{11},\gamma^{22}\} > \frac{\gamma^{12}+\gamma^{21}}{12} \geq 0$ ; in~\cite{david2023degenerate}, the limit is proved with $n=2$ and $\gamma^{ij}=1$ for $i,j=1,2$. In this work the method used to prove the convergence is very different from the work in \cite{burger2022porous} as well as our hypothesis.

We now list a set of assumptions on the interaction function and on the initial data. For the initial data we assume that, with uniform bounds with respect to $\eps$, 
\beq 
{u_\eps^i}^0  \geq 0, \qquad |x|^2  {u_\eps^i}^0  \in L^1(\R^d) \qquad {u_\eps^i}^0  \in L^1(\R^d)\cap L^2(\R^d), \qquad{u_\eps^i}^0 \ln({u_\eps^i}^0) \in L^1(\R^d).
 \label{sys:as1}
\eeq
For the interaction function, we consider kernels  $K^{ij}$ that can be decomposed thanks to a matrix $ A:=(\alpha^{kj}) \in \mathcal{M}_{p,N}(\R)$ with $p \geq N$ and $(\rho_\eps^i)_{i \in [\![1,N]\!]} \in (L^1(\R^d))^N$ under the form
\beq \label{sys:as2}
K^{ij}(x) = \sum_{k=1}^p \alpha^{ki} \alpha^{kj} \check{\rho}^{i} * {\rho}^{j} (x), \qquad \forall (i,j) \in [\![1,N]\!]^2,
\eeq
with the notation $\check{\rho}^{i}(.) = {\rho}^{i}(-.)$. Additionally, we assume that
\beq 
\mbox{rank}(A) = N,\qquad \int_{\R^d} \rho^i(x)\diff x =1, \qquad    \rho^i(x)  \geq 0,  \qquad   \int_{\R^d} |x|^\f{d+2}{2}\rho^i(x)  \diff x < +\infty.
 \label{sys:as3}
\eeq
Under these assumptions, the matrix $K^{ij}(x)$ is not necessarily symmetric, but we remark that $\gamma^{ij} =\sum^p_{k=1} \alpha^{ki} \alpha^{kj}=(A^T A)_{ij}$ and that the matrix $G:=(\gamma^{ij})_{i,j}$ is symmetric positive definite. 

It is then natural to define
$$ \rho_\eps^i(x) = \f{1}{\eps^d}  \rho^i(\f{x}{\eps}) $$
which gives 
$K_\eps^{ij} = \sum_{k=1}^p \alpha^{ki} \alpha^{kj} \check{\rho}_\eps^{i} * {\rho}_\eps^{j} $. With these conditions, we may write 
\beq \begin{aligned} \label{sys:calcul_convol}
\sum_{i,j=1}^N  \int_{ \R^d}  f^{i}_\eps \; K_\eps^{ij}*f^{j}_\eps \diff x   &= \sum_{i,j=1}^N \sum_{k=1}^p \int_{ \R^d}  \alpha^{ki} \alpha^{kj} f^{i}_\eps \;  \check{\rho}^i_\eps* {\rho}^j_\eps*f^{j}_\eps \diff x 
\\
&= \sum_{k=1}^p  \int_{ \R^d} \Big(  \sum_{i=1}^N \alpha^{ki}  \rho^i_\eps*f^{i}_\eps \Big) \Big(  \sum_{j=1}^N \alpha^{kj}  \rho^j_\eps*f^{j}_\eps \Big)
\\
&= \sum_{k=1}^p \int_{ \R^d} \Big(  \sum_{i=1}^N \alpha^{ki}  \rho^i_\eps*f^{i}_\eps \Big)^2 ,
\end{aligned}
\eeq
and for $k\in[\![1,p]\!]$ we have
\[
\sum_{i=1}^N \alpha^{ki}  \rho^i_\eps*f^{i}_\eps  =  \big(A (\rho_\eps^i*f^{i}_\eps)_i  \big)_k \quad \mbox{ (as a matrix-vector product)}.
\]
We will often use the following computation: with the notation $A^{-1} = (\beta_{ij})$ the left inverse of $A$, we may write
\beq \label{sys:calcul_inv}
 \rho^j_\eps*f^{j}_\eps = \sum_{k=1}^p \beta_{jk} \Big( \sum_{i=1}^N \alpha^{ki}  \rho^i_\eps*f^{i}_\eps  \Big),
\eeq
hence
\[ 
  \| \rho^j_\eps*f^{j}_\eps \|_{L^2(\R^d)} \leq \sum_{k=1}^p |\beta_{jk}| \: \| \sum_{i=1}^N \alpha^{ki}  \rho^i_\eps*f^{i}_\eps \|_{L^2(\R^d)} .
\]

We are ready to state our main theorem.

\begin{theorem}
For $\ep\in (0,1),$ let $(\rho_\ep)_{i\in[\![1,N]\!]}$ be defined by~\eqref{sys:as2}-\eqref{sys:as3} and $({u^i_\ep}^0)_{i\in[\![1,N]\!]}$ be a family of  initial data  satisfying \eqref{sys:as1}. Let $({u^i_\ep})_{i\in[\![1,N]\!]}$ be a weak solution of \eqref{sys:convol}, then for all $i\in[\![1,N]\!]$ there exist subsequences $({u^i_\ep})$ and $(\rho_\ep^i *{u^i_\ep})$ that are not relabelled such that 
\begin{align}
\rho^{i}_\eps * u^{i}_\eps & \rightarrow u^{i}_0 \quad \mbox{ strongly in } \quad L^p(0,T;L^
q_{\mbox{\scriptsize{loc}}}(\R^d)) \quad \forall 1\leq p < 2, \; 1\leq q < \f{2d}{d-2},
 \label{sys:lim_L1strong}\\ 
 \rho^{i}_\eps * u^{i}_\eps & \rightharpoonup  u^{i}_0 \quad \mbox{ weakly in } \quad L^2(0,T;H^1(\R^d)), \label{sys:lim_H1weak} \\
 u^{i}_\eps & \rightharpoonup  u^{i}_0 \quad \mbox{ weakly in } \quad L^1_{\mbox{\scriptsize{loc}}}((0,T)\times\R^d). \label{sys:lim_Cinfweaku}
\end{align}
and $(u^i_0)_{i\in [\![1,N]\!]}$ is a solution of \eqref{sys:convol_limit} in the distributional sense.
\end{theorem}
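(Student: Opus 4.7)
The strategy is to follow the blueprint of Section~\ref{single}, replacing every scalar estimate by a system-level one that, thanks to the rank-$N$ assumption on $A$, decouples upon inverting $A$. The central new observation is that the natural entropy $\mathcal{E}(u_\eps)=\sum_{i=1}^N \int u^i_\eps \ln u^i_\eps$ satisfies, after componentwise integration by parts as in Step~4 of Proposition~\ref{props1:ape} and the sum-of-squares identity \eqref{sys:calcul_convol},
\[
\frac{d}{dt}\mathcal{E}(u_\eps)=-\sum_{i,j=1}^N \int \nabla u^i_\eps \cdot \nabla(K^{ij}_\eps * u^j_\eps)\diff x = -\sum_{k=1}^p \int \Bigl|\sum_{i=1}^N \alpha^{ki}\nabla(\rho^i_\eps * u^i_\eps)\Bigr|^2 \diff x.
\]
Because $\mathrm{rank}(A)=N$, the inversion \eqref{sys:calcul_inv} applied to $\nabla(\rho^j_\eps * u^j_\eps)$ transfers a uniform $L^2$ bound on each $\sum_i \alpha^{ki}\nabla(\rho^i_\eps * u^i_\eps)$ into a uniform bound on every individual $\nabla(\rho^i_\eps * u^i_\eps)$. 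Combined with the lower control of $\mathcal{E}$ by the second moment (Step~3 of the one-species case), this yields \eqref{sys:lim_H1weak} as an a priori estimate.

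The remaining a priori estimates mirror Proposition~\ref{props1:ape}: non-negativity and mass conservation from truncation and integration in space; the second moment from testing against $|x|^2$ combined with the drift dissipation $\sum_i \int u^i_\eps|\nabla \sum_j K^{ij}_\eps * u^j_\eps|^2$, itself obtained by testing the $i$-th equation against $\sum_j K^{ij}_\eps * u^j_\eps$ and summing in $i$, reducing once more via \eqref{sys:calcul_convol}; and a Gagliardo--Nirenberg interpolation between $L^\infty_t L^1$ and $L^2_t H^1$ then gives $\rho^i_\eps * u^i_\eps \in L^2(0,T;L^{2d/(d-2)})$ uniformly in $\eps$.

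Space compactness of $\rho^i_\eps * u^i_\eps$ is immediate from the $L^2_t H^1_x$ bound. For time compactness we use the mollification argument of Sec.~\ref{ss1.2} applied to
\[
\partial_t(\rho^i_\eps * u^i_\eps) = \dv Q^i_\eps,\qquad Q^i_\eps(t,x)=\int \rho^i_\eps(x-y)\, u^i_\eps(t,y) \sum_{j=1}^N \nabla K^{ij}_\eps * u^j_\eps(t,y)\,\diff y,
\]
whose $L^1((0,T)\times\R^d)$ norm is controlled by Cauchy--Schwarz through the $L^\infty_t L^2$ and drift-dissipation bounds. The Weil--Kolmogorov--Fréchet theorem then yields strong $L^1_{\mathrm{loc}}$ convergence, which upgrades to \eqref{sys:lim_L1strong} by interpolation with the $L^2_t L^{2d/(d-2)}$ bound, while \eqref{sys:lim_Cinfweaku} follows from the uniform $L^\infty_t L^1$ bound together with $u^i_\eps - \rho^i_\eps * u^i_\eps \rightharpoonup 0$ in $\mathcal{D}'$.

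For the nonlinear drift we adapt Sec.~\ref{ss1.3} species by species: for $\phi\in C_c^\infty$ and $\Phi=\nabla\phi$, each term $\int \Phi\cdot u^i_\eps \nabla(K^{ij}_\eps * u^j_\eps)$ splits as $\sum_k \alpha^{ki}\alpha^{kj}\int \Phi\cdot (\rho^i_\eps * u^i_\eps)\nabla(\rho^j_\eps * u^j_\eps) + \int r^{ij}_\eps\cdot \nabla(\rho^j_\eps * u^j_\eps)$, where $r^{ij}_\eps$ is the usual convolution commutator. The main obstacle, and the only point where assumption \eqref{sys:as3} enters beyond mass, is that $\rho^i$ is no longer compactly supported, so the simple bound $|r^{ij}_\eps|\leq \eps L_\Phi\,\rho^i_\eps * u^i_\eps$ of Sec.~\ref{ss1.3} fails. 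One replaces it by a H\"older interpolation distributing $|x-y|$ between the moment weight $|x-y|^{(d+2)/2}\rho^i_\eps(x-y)$, the mass $\rho^i_\eps(x-y)$, and $u^i_\eps(y)$; after the scaling $z=(x-y)/\eps$, the moment assumption in \eqref{sys:as3} and the uniform $L^{2+2/d}$ bound of Remark~\ref{remark4} yield $\|r^{ij}_\eps\|_{L^2((0,T)\times\R^d)} \to 0$. The strong--weak product of \eqref{sys:lim_L1strong} and \eqref{sys:lim_H1weak} then passes the main term to $\gamma^{ij}\int \Phi\cdot u^i_0\nabla u^j_0$, and summing in $j$ delivers the distributional form of \eqref{sys:convol_limit}.
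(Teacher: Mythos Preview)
Your proposal is correct and follows essentially the same approach as the paper's proof (Secs.~\ref{ss2.1}--\ref{ss2.3}): entropy dissipation combined with the sum-of-squares identity \eqref{sys:calcul_convol} and the rank-$N$ inversion \eqref{sys:calcul_inv} for the $H^1$ bound, the mollification argument on $\partial_t(\rho^i_\eps * u^i_\eps)=\dv Q^i_\eps$ for time compactness, and the H\"older/moment treatment of the commutator $r^i_\eps$ to handle non-compactly-supported $\rho^i$. Two cosmetic discrepancies: the paper controls $\|Q^i_\eps\|_{L^1}$ via the $L^\infty_t L^1$ norm of $u^i_\eps$ (through $\|\sqrt{u^i_\eps}\|_{L^2}^2$) rather than an $L^\infty_t L^2$ bound, and closes the commutator estimate with the $L^2_t L^{2d/(d-2)}_x$ norm of $\rho^i_\eps * u^i_\eps$ (via H\"older with exponents $\tfrac{d+2}{d-2}$ and $\tfrac{d+2}{4}$) rather than the $L^{2+2/d}_{t,x}$ bound of Remark~\ref{remark4}.
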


This result is a generalisation of the result presented in Sec.~1 with two new features: the multi-species aspect of the population, and the fact that the interaction functions $\rho_\eps^i$ do not necessarily have a compact support. The proof is structured as follows. In Sec.~\ref{ss2.1} we present a priori estimates. In Sec.~\ref{ss2.2} we show compactness of $(\rho^{i}_\eps * u^{i}_\eps)_\eps$, and finally in Sec.~\ref{ss2.3} we show the convergence in Eq.~\eqref{sys:convol}.

\begin{remark} An example of interaction kernel. The Gaussian function used in \cite{JungelPortischZurek_2022} 
\beq 
K^{ij}_\eps(x)= \frac{\gamma^{ij}}{(2\pi\eps^2)^{d/2}} \exp \Big( -\frac{|x|^2}{2\eps^2} \Big)
\eeq
satisfies assumptions \eqref{sys:as2}-\eqref{sys:as3}. Here,  $\gamma^{ij}$ is defined such that there exists $A=(\alpha^{ij})$ invertible such that $\gamma^{ij} = \sum_{k=1}^p \alpha^{ki} \alpha^{ kj}$.
We remark that 
\[ \begin{aligned}
K^{ij}_\eps(x)  &= \ \gamma^{ij} \frac{e^{ -\frac{|x|^2}{2\eps^2} }}{(2\pi\eps^2)^{d/2}} = \gamma^{ij}  \int_{\R^d} \frac{e^{ -\frac{|x-y|^2}{\eps^2} }}{(\pi\eps^2)^{d/2}}  \frac{e^{ -\frac{|y|^2}{\eps^2} }}{(\pi\eps^2)^{d/2}} \diff y
\\
&= \sum_{k=1}^p \alpha^{ki} \alpha^{kj}  \check{\rho}^{i}_\eps * \rho^{j}_\eps (x),
\end{aligned}
\]
with $\rho^i_\eps(x)=\frac{e^{ -\frac{|x|^2}{\eps^2} }}{(\pi\eps^2)^{d/2}}$. Note than in this case $\rho_\eps^i = \check{\rho}_\eps^i$ and $\rho_\eps^i$  does not depend on $i$.
\end{remark}

\subsection{A priori estimates} \label{ss2.1}

\begin{proposition} \label{prop:ape}
Assume \eqref{sys:as1}-\eqref{sys:as3} and let $(u^i_\eps)_{i\in[\![1,N]\!]}$ be solutions of \eqref{sys:convol}. Then, for all $t \geq 0$ and $i \in [\![1,N]\!]$, $u^i_\eps \geq 0$ and, for some constants $C_1(T)$, $C_2(T)$ independent of $\ep$
$$ 
\int_{\R^d} |x|^2 u^{i}_\eps (t) \leq C_1 (T) \qquad \mbox{and} \qquad \int_{\R^d} u^{i}_\eps(t) \,|\ln \; u^{i}_\eps (t)|  \leq C_2 (T) , \qquad \forall\; t\in [0,T].
$$
Moreover, we have the following estimates, uniform with respect to $\ep$,
\begin{align}
& u^i_\eps \in L^\infty(0,T;L^1(\R^d)) \label{sys:uL1}, \\
& \rho^i_\eps*u^{i}_\eps \in L^\infty(0,T;L^1(\R^d)) \label{sys:L1}, \\ 
& \rho^i_\eps*u^{i}_\eps \in L^2(0,T;H^1(\R^d))   \label{sys:H1} ,\\ 
& \rho^i_\eps*u^{i}_\eps \in L^2(0,T;L^{\f{2d}{d-2}}(\R^d))  \label{sys:L2d/d-2},  \qquad {\text{for }}d\geq 3, \\ 
& \sqrt{u^{i}_\eps}  \sum_{j=1}^N \nabla K_\eps^{ij}*u^{j}_\eps  \in L^2((0,T) \times\R^d ). \label{sys:entropy1}
\end{align}
For $d=2,$ the estimates are the same except~\eqref{sys:L2d/d-2} which is replaced by $\rho^i_\eps*u^{i}_\eps \in L^2(0,T;L^{p}(\R^d))$ for any $1\leq p<\infty,$ see also Remarks~\ref{remark4} and~\ref{remark5} of the one species case.

\end{proposition}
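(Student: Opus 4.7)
The structure parallels Proposition~\ref{props1:ape}; the decisive new ingredient is the algebraic identity~\eqref{sys:calcul_convol} combined with the left-invertibility of $A$ granted by~\eqref{sys:as3}. Positivity of each $u^i_\eps$, conservation of $\int u^i_\eps\diff x$, and hence~\eqref{sys:uL1} together with the $L^\infty_t L^1_x$ part of~\eqref{sys:L1}, are obtained species-by-species as in Step~1 of the one-species proof (multiply the equation for $u^i_\eps$ by $-\1_{u^i_\eps\leq 0}$, then integrate). The second moment bound and the lower entropy bound $\int u^i_\eps|\ln u^i_\eps|_-\diff x\leq C$ are likewise obtained species-by-species exactly as in Steps~2 and~3 of Proposition~\ref{props1:ape}, provided we first have a dissipation bound on $\sum_i\int u^i_\eps\,|\sum_j\nabla K^{ij}_\eps*u^j_\eps|^2$, which we derive next.

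\textbf{Energy identity and~\eqref{sys:entropy1}.} Because flipping $\check{\rho}^i_\eps*\rho^j_\eps$ produces $\check{\rho}^j_\eps*\rho^i_\eps$, one has $\check{K}^{ij}_\eps=K^{ji}_\eps$, so the quantity $\sum_{i,j}\int u^i_\eps\,K^{ij}_\eps*u^j_\eps$ is symmetric in its two copies of $u$ by~\eqref{prop:convol}. Testing the equation for $u^i_\eps$ against $\sum_j K^{ij}_\eps*u^j_\eps$ and summing over $i$ therefore yields
\[
\f12\f{\diff}{\diff t}\sum_{i,j=1}^N\int_{\R^d} u^i_\eps\,K^{ij}_\eps*u^j_\eps\diff x \;=\; -\sum_{i=1}^N\int_{\R^d} u^i_\eps\,\Bigl|\sum_{j=1}^N\nabla K^{ij}_\eps*u^j_\eps\Bigr|^2\diff x.
\]
By the identity~\eqref{sys:calcul_convol} the left-hand side equals $\f12\f{\diff}{\diff t}\sum_k\|v^k_\eps\|_{L^2}^2$ with $v^k_\eps:=\sum_i\alpha^{ki}\rho^i_\eps*u^i_\eps$; the initial value is finite thanks to~\eqref{sys:as1} and Young's inequality. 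Integration in time gives~\eqref{sys:entropy1} and a uniform $L^\infty_t L^2_x$ bound on each $v^k_\eps$. Since $\mathrm{rank}(A)=N$, the inversion formula~\eqref{sys:calcul_inv} then converts this into an $L^\infty_t L^2_x$ bound on every $\rho^i_\eps*u^i_\eps$.

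\textbf{Entropy inequality, $H^1$ bound and Sobolev embedding.} Differentiating the classical entropy $\sum_i\int u^i_\eps\ln u^i_\eps$, integrating by parts, and using~\eqref{sys:as2} together with~\eqref{prop:convol} on each term, the mixed gradient $\int\nabla u^i_\eps\cdot\nabla(\check{\rho}^i_\eps*\rho^j_\eps*u^j_\eps)$ becomes $\int\nabla(\rho^i_\eps*u^i_\eps)\cdot\nabla(\rho^j_\eps*u^j_\eps)$, so the right-hand side collapses into the sum of squares $-\sum_k\int\bigl|\sum_i\alpha^{ki}\nabla(\rho^i_\eps*u^i_\eps)\bigr|^2$. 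Combined with the lower entropy bound obtained above, integration in time controls $\sum_k\|\sum_i\alpha^{ki}\nabla(\rho^i_\eps*u^i_\eps)\|_{L^2_{t,x}}^2$; the rank-$N$ inversion~\eqref{sys:calcul_inv} applied to $\nabla(\rho^i_\eps*u^i_\eps)$ then produces~\eqref{sys:H1}, and the Gagliardo-Nirenberg-Sobolev inequality (as in Step~4 of Proposition~\ref{props1:ape}) yields~\eqref{sys:L2d/d-2} and its $d=2$ counterpart. The only genuinely new difficulty relative to the one-species case is this \emph{decoupling} step: both the energy and the entropy computations naturally control only the coupled linear combinations $\sum_i\alpha^{ki}(\rho^i_\eps*u^i_\eps)$, and recovering individual bounds on each $\rho^i_\eps*u^i_\eps$ relies essentially on the full-rank hypothesis on $A$, which plays here the role that the auto-convolution assumption~\eqref{hyp:Into_1} plays in the single-species setting.
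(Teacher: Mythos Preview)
Your proposal is correct and follows essentially the same approach as the paper: positivity and mass conservation species-by-species, then the energy identity (using the symmetry $\check K^{ij}_\eps=K^{ji}_\eps$ and the sum-of-squares rewriting~\eqref{sys:calcul_convol}) to obtain~\eqref{sys:entropy1} together with an $L^\infty_tL^2_x$ bound, then the second moment and lower entropy bounds, and finally the entropy inequality collapsing via~\eqref{sys:calcul_convol} to control $\nabla(\rho^i_\eps*u^i_\eps)$ after left-inversion of $A$, with Sobolev embedding yielding~\eqref{sys:L2d/d-2}. The only cosmetic difference is the order of presentation (you announce the second moment and lower entropy bounds before the energy identity on which they depend, whereas the paper treats them sequentially), but the logical dependencies and the use of the rank-$N$ decoupling are identical.
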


\begin{proof}
\noindent {\em Step 1. Positivity, $L^1$ bound and $L^2$ bound.} 
Let us first prove that $u^i_\eps \geq 0$ for $i \in [\![1,N]\!]$. We multiply Eq.~\eqref{sys:convol} by $-\1_{u^i_\ep \geq 0}$ to get
\[\p_t |u^i_\ep |_- = \dv [|u^i_\ep |_- \nabla_x (K_\eps^{ij} * u^i_\ep)] .
\]
Integrating in space gives
\[ \f{\diff }{\diff t} \dst \int_{\R^d} |u^i_\ep |_- \diff x = 0.
\]
Hence $|u^i_\ep(t,\cdot)|_-=|{u_\eps^i}^0|_-=0$ for all $i \in [\![1,N]\!]$, which gives us the positivity. Additionally, for a given $i \in [\![1,N]\!]$, integrating Eq.~\eqref{sys:convol} in space gives
\[ \frac{\diff}{\diff t} \int_{\R^d} u^i_\ep (t,x) \diff x= \int_{\R^d} \sum_{j=1}^N\dv [ u^{i}_\eps \nabla K_\eps^{ij}* u^{j}_\eps ]\diff x = 0. \]
Therefore $ \int_{\R^d} u^i_\ep (t,x) \diff x=\int_{\R^d} {u_\eps^i}^0 \diff x $ and  we have estimate \eqref{sys:uL1}. Since $\rho^i \in L^1(\R^d)$, we deduce easily the estimate \eqref{sys:L1}. The $L^2$ control comes from the following computation: on the one hand we have
\[ \begin{aligned}
\f 12 \f{\diff }{\diff t}  \sum_{i,j=1}^N \int_{ \R^d} u^{i}_\eps  \; K_\eps^{ij}*u^{j}_\eps \diff x &= \f 12  \sum_{i,j=1}^N \int_{ \R^d}  \partial_t u^{i}_\eps  \; K_\eps^{ij}*u^{j}_\eps \diff x +  \f 12  \sum_{i,j=1}^N \int_{ \R^d} u^{i}_\eps  \; K_\eps^{ij}* \partial_tu^{j}_\eps \diff x
\\ 
& = \sum_{i,j=1}^N \int_{ \R^d}  \partial_t u^{i}_\eps  \; K_\eps^{ij}*u^{j}_\eps \diff x
\\
& = - \sum_{i=1}^N  \int_{ \R^d} u^{i}_\eps  |\sum_{j=1}^N \nabla K_\eps^{ij}*u^{j}_\eps|^2 \diff x \leq 0.
\end{aligned}
\]
On the other hand, given the formula~\eqref{sys:calcul_convol}, we have
\[  
\sum_{i,j=1}^N \int_{ \R^d} u^{i}_\eps  \; K_\eps^{ij}*u^{j}_\eps = \sum_{k=1}^p \int_{ \R^d} \Big(  \sum_{i=1}^N \alpha^{ki}  \rho^i_\eps*u^{i}_\eps \Big)^2 .
\] 
Thus, inserting it in the previous equation, and after integrating in time, we find
\[   \sum_{k=1}^p \int_{ \R^d} \Big(  \sum_{i=1}^N \alpha^{ki}  \rho^i_\eps*u^{i}_\eps \Big)^2+ \sum_{i=1}^N  \int_{ 0}^T \int_{ \R^d}  u^{i}_\eps  |\sum_{j=1}^N \nabla K_\eps^{ij}*u^{j}_\eps|^2 \leq \sum_{k=1}^p \int_{ \R^d} \Big(  \sum_{i=1}^N \alpha^{ki}  \rho^i_\eps*{u_\eps^i}^0 \Big)^2.
\]
This immediately proves \eqref{sys:entropy1} and we also have that $ \sum_{i=1}^N \alpha^{ki}  \rho^i_\eps*u^{i}_\eps \in L^\infty(0,T;L^2(\R^d ))$ for all $k =1, ... ,p $. 
Moreover, given Eq.~\eqref{sys:calcul_inv} we deduce 
\[
\rho^i_\eps*u^{i}_\eps = \sum_{k=1}^p \beta_{ik} \Big( \sum_{i=1}^N \alpha^{ki}  \rho^i_\eps* u^{i}_\eps  \Big) \in L^\infty(0,T;L^2(\R^d )) \quad \forall i =1, ... ,N.
\]
\\[5pt]
\noindent {\em Step 2. Second moment control.} 
For a given $i\in [\![1,N]\!]$, we compute
\[ \begin{aligned}
 \f{\diff }{\diff t}  \int_{\R^d}| x|^2 u^i_\eps  & =  \int_{\R^d} |x|^2 \dv [u^i_\ep   ( \sum_{j=1}^N \nabla_x K_\eps^{ij} * u^i_\ep)] \\
 & = - \int_{\R^d} 2x u^i_\ep ( \sum_{j=1}^N \nabla_x  K_\eps^{ij} * u^i_\ep) \\
 & \leq  2 \Big(\int_{\R^d} |x|^2 u^i_\ep \Big)^{1/2} \Big( \int_{\R^d} u^i_\ep \big| ( \sum_{j=1}^N \nabla_x K_\eps^{ij} * u^i_\ep) \big|^2\Big)^{1/2} .
\end{aligned}
\]
Hence 
\[
 \f{\diff }{\diff t} \Big( \int_{\R^d} |x|^2 u^i_\eps \Big )^{1/2} \leq \Big( \int_{\R^d} u^i_\ep \big|  \sum_{j=1}^N \nabla_x K_\eps^{ij} * u^i_\ep \big|^2\Big)^{1/2} .
\]
finally, after integration in time we find
\[\ \Big( \int_{\R^d} |x|^2 u^i_\eps \Big )^{1/2} \leq \Big( \int_{\R^d} |x|^2 {u_\eps^i}^0 \Big )^{1/2} + \int_0^t \Big( \int_{\R^d} u^i_\ep \big|  \sum_{j=1}^N \nabla_x K_\eps^{ij} * u^i_\ep \big|^2\Big)^{1/2} .
\]
Applying the Cauchy-Schwarz inequality to the second term of the right-hand side of the equation and taking the square, we obtain the annouced estimate 
\[\  \int_{\R^d}| x|^2 u^i_\eps \leq   2\int_{\R^d} |x|^2 {u^i_\eps}^0 + 2t \int_0^t \int_{\R^d} u^i_\ep \big| ( \sum_{j=1}^N \nabla_x K_\eps^{ij} * u^i_\ep) \big|^2  < +\infty,
\]
thanks to estimate \eqref{sys:entropy1} and the initial condition \eqref{sys:as1}.
\\[5pt]
{\em Step 3. Space compactness.}  We consider the classical entropy $\sum_{i=1}^N \int_{ \R^d} u^{i}_\eps \ln (u^{i}_\eps)$ and compute its derivative

\[\begin{aligned}
\f{\diff }{\diff t}  \sum_{i=1}^N \int_{ \R^d} u^{i}_\eps \ln (u^{i}_\eps)&=- \sum_{i,j=1}^N  \int_{ \R^d}  \nabla u^{i}_\eps \; K_\eps^{ij}*\nabla u^{j}_\eps \diff x
\\
& = - \sum_{k=1}^p \int_{ \R^d} \Big(  \sum_{i=1}^N \alpha^{ki}   \nabla \rho^i_\eps*u^{i}_\eps \Big)^2 \diff x,
\end{aligned} \]
where we have used~\eqref{sys:calcul_convol}. Then, after integrating in time we get
\[  \sum_{i=1}^N \int_{ \R^d} u^{i}_\eps |\ln (u^{i}_\eps)| +  \sum_{k=1}^p  \int_0^T \int_{ \R^d} \Big(  \sum_{i=1}^N \alpha^{ki}   \nabla \rho_\eps*u^{i}_\eps \Big)^2 \diff x \diff t \leq \sum_{i=1}^N \int_{ \R^d} {u^{i}_\eps}^0 \ln ({u^{i}_\eps}^0) +2 \sum_{i=1}^N \int_{ \R^d} u^{i}_\eps |\ln (u^{i}_\eps)|_-.
\]
Moreover for a given $i\in [\![1,N]\!]$, $\int_{ \R^d} u^{i}_\eps |\ln (u^{i}_\eps)|_- \diff x$ can be decomposed as follows:
\[\dst\int_{ \R^d} u^{i}_\eps |\ln (u^{i}_\eps)|_- \diff x=\dst \int_{ \R^d} u^{i}_\eps |\ln (u^{i}_\eps)|_- \1_{u^{i}_\ep \geq e^{-|x|^2}} \diff x + \dst\int_{ \R^d} u_\eps |\ln (u^{i}_\eps)|_- \1_{u^{i}_\ep \leq e^{-|x|^2}}\diff x.
\]
We then bound each term, noticing first that when $u_\ep \geq e^{-|x|^2}$ then $ |\ln (u^{i}_\eps)|_- =-\ln (u^{i}_\eps) \1_{u^{i}_\ep \leq 1} \leq |x|^2$
\[
\int_{ \R^d} u^{i}_\eps |\ln (u^{i}_\eps)|_- \1_{u^{i}_\ep \geq e^{-|x|^2}} \diff x =\dst\int_{ \R^d} u^{i}_\eps |\ln (u^{i}_\eps)|_- \1_{u^{i}_\ep \geq e^{-|x|^2}} \diff x
\leq 
\dst\int_{ \R^d} |x|^2 u^{i}_\eps \diff x < +\infty,
 \]
 and, noticing now that when $x\geq 1$ and $y \leq e^{-|x|^2}$ then $ y |\ln (y)| \leq e^{-|x|^2} |\ln (e^{-|x|^2})|$ because $x\mapsto x|\ln(x)|$ is increasing on $(0,e^{-1})$ (with a maximum on $e^{-1}$)
 \begin{multline*}
   \dst\int_{ \R^d} u_\eps |\ln (u^{i}_\eps)|_- \1_{u^{i}_\ep \leq e^{-|x|^2}}\diff x =\dst \int_{ \R^d} u^{i}_\eps |\ln (u^{i}_\eps)|_- \1_{u^{i}_\ep  \leq e^{-|x|^2}}\diff x
  \\ \leq \dst \int_{|x|\leq 1} u^{i}_\eps |\ln (u^{i}_\eps)|_- \1_{u^{i}_\ep \leq 1}\diff x
  +
  \dst \int_{ |x|\geq 1} |x|^2 e^{-|x|^2} \diff x < + \infty.  
 \end{multline*} 
Then $ \sum_{i=1}^N \alpha^{ki}  \nabla \rho^{i}_\eps*u^{i}_\eps \in  L^2(0,T;L^2(\R^d)) $ for all $k =1, ... ,p $. Using again the inversion formula \eqref{sys:calcul_inv}, we deduce 
\[
\nabla \rho^{j}_\eps*u^{j}_\eps \in L^2(0,T; L^2(\R^d)) \quad \forall j =1, ... ,N ,
\]
which gives the estimates \eqref{sys:H1} (since we have already proved the corresponding $L^2$ bound). In addition using the  Gargliano-Nirenberg-Sobolev inequality, see \cite{LiebLoss}, we have
\[
\| \rho^{j}_\eps*u^{j}_\eps \|_{L^2(0,T;L^{\f{2d}{d-2}}(\R^d))} \leq \| \nabla \rho^{j}_\eps*u^{j}_\eps \|_{L^2(0,T;L^2(\R^d))},
\]
for $d\geq 3$, which shows estimate \eqref{sys:L2d/d-2}. This concludes the proof of Proposition~\ref{prop:ape}.
\end{proof}

\subsection{Compactness} \label{ss2.2}

Our next step is to prove space and time compactness. We first remark that, thanks to the bounds \eqref{sys:L1} and \eqref{sys:H1}, we have
\beq \label{sys:L1loc}
\rho^{i}_\eps * u^{i}_\eps \in L^\infty(0,T;L^1_{\mbox{\scriptsize{loc}}}(\R^d)), \quad \nabla \rho^{i}_\eps * u^{i}_\eps \in L^1_{\mbox{\scriptsize{loc}} }((0,\infty)\times\R^d),  \quad \forall i \in [\![1,N]\!], 
\eeq
which provides us with space compactness. To pass to the limit $\eps \rightarrow 0$ we need to obtain some compactness in time. To this aim, we compute the equation of $\rho^i_\eps * u^{i}_\eps$ for a given $i\in [\![1,N]\!]$,
\[
\p_t \rho^{i}_\eps * u^{i}_\eps = \dv [\underbrace{ \rho^{i}_\eps* \Big( u^i_\eps  \sum_{j=1}^N  \; \nabla_x K_\eps^{ij}*u^{j}_\eps \Big)}_{:=Q^i_\eps(t,x)}],
\]
with $Q^i_\eps(t,x)$ defined by $
Q^i_\eps(t,x) = \int_{\R^d} \rho^i_\eps (x-y) u^i_\eps (y)   \sum_{j=1}^N  \; \nabla_x K_\eps^{ij} *u^{j}_\eps(y) \diff y.$
Thus, we may write
\[\begin{aligned}
\|  Q^i_\eps(t,x)  \|_{L^1(\R^d)}  &\leq  \|  \rho^i_\eps  \|_{L^1(\R^d)}  \int_{\R^d} \   u^i_\eps (y)  \sum_{j=1}^N  \; \nabla_x K_\eps^{ij} *u^{j}_\eps(y) \diff y
\\ &
 \leq   \f{1}{2} \Big(   \|  \sqrt{u^i_\eps}  \|^2_{L^2(\R^d)} +  \|  \sqrt{u^i_\eps}  \sum_{j=1}^N  \; \nabla_x K^{ij}_\eps *u^{j}_\eps  \|^2_{L^2(\R^d)} \Big),
\end{aligned}
 \]
 and
\beq   \|  Q^i_\eps(t,x)  \|_{L^1(0,T;L^1(\R^d))} \leq \f{1}{2}  \Big(  T\|  u^i_\eps  \|_{L^\infty(0,T;L^1(\R^d))} + \|  \sqrt{u^i_\eps}   \sum_{j=1}^N  \; \nabla_x K^{ij}_\eps *u^{j}_\eps  \|_{L^2(0,T;L^2(\R^d))}^2 \Big).
\eeq
Therefore $Q^i_\eps$ is bounded in $ L^1((0,T)\times\R^d)$. This, together with~\eqref{sys:L1loc}, allows us to show that for a compact set $K \subset \R^d$ and $k>0$, we have
\beq \label{sys:Comp_t}
\Vert \rho^i_\ep * u^i_\ep (t+k,x) - \rho^i_\ep * u^i_\ep (t,x) \Vert_{L^1((0,T)\times K)} \leq C(K,T) k \Vert \nabla u^i_\ep * \rho^i_\ep \Vert_{L^2((0,T)\times K)} \leq C(K,T) \sqrt{k}.
 \eeq
 Indeed considering a mollifier sequence $\omega_\eta(x) = \frac{1}{\eta^d} \omega_(\frac{1}{\eta})$, we have

\[ \begin{aligned} \Vert \rho^i_\ep * u^i_\ep (t+k,x) - \rho^i_\ep * u^i_\ep (t,x) \Vert_{L^1((0,T)\times K)} \leq &  \Vert \rho^i_\ep * u^i_\ep (t+k,x) - \rho^i_\ep * u^i_\ep *\omega_\eta (t+k,x) \Vert_{L^1((0,T)\times K)} \\
& + \Vert \rho^i_\ep * u^i_\ep (t,x) - \rho^i_\ep * u^i_\ep *\omega_\eta (t,x) \Vert_{L^1((0,T)\times K)}  \\
& + \Vert \rho^i_\ep * u^i_\ep*\omega_\eta (t+k,x) - \rho^i_\ep * u^i_\ep *\omega_\eta (t,x) \Vert_{L^1((0,T)\times K)}  ,\\
 \end{aligned} \] 
 and
 \[ \begin{aligned} 
 & \Vert \rho^i_\ep * u^i_\ep (t+k,x) - \rho^i_\ep * u^i_\ep *\omega_\eta (t+k,x) \Vert_{L^1((0,T)\times K)} \\
 & \qquad = \int_0^T \int_K \int_K \Big( \rho^i_\ep * u^i_\ep (t+k,x) - \rho^i_\ep * u^i_\ep (t+k,x-y) \Big) \omega_\eta(y) \diff y  \diff x \diff t \\
& \qquad \leq C(K,T) \eta \int_0^T \int_K \int_K  \nabla \rho^i_\ep * u^i_\ep (t+k,x) \omega_\eta(y) \diff y  \diff x \diff t \\
& \qquad \leq \tilde{C}(K,T) \eta \Vert \nabla u_\ep * \rho_\ep \Vert_{L^2((0,T)\times K)} .
 \end{aligned}\]
Similarly, we have
 \[
    \Vert \rho^i_\ep * u^i_\ep (t,x) - \rho^i_\ep * u^i_\ep *\omega_\eta (t,x) \Vert_{L^1((0,T)\times K)}  \leq \tilde{C}(K,T) \eta \Vert \nabla u_\ep * \rho_\ep \Vert_{L^2((0,T)\times K)}
\]
 Finally, we obtain
 \[ \begin{aligned} 
\Vert \rho^i_\ep * u^i_\ep*\omega_\eta (t+k,x) - \rho^i_\ep * u^i_\ep *\omega_\eta (t,x) \Vert_{L^1((0,T)\times K)} & = \Vert \int_t^{t+k} \p_t \rho^i_\ep * u^i_\ep*\omega_\eta (s,x) \diff s\Vert_{L^1((0,T)\times K)} \\
& \leq \Vert \int_t^{t+k} \dv (Q^i_\ep) *\omega_\eta (s,x) \diff s\Vert_{L^1((0,T)\times K)} \\
& \leq \int_t^{t+k}  \Vert Q^i_\ep *\dv (\omega_\eta) (s,x) \Vert_{L^1((0,T)\times K)}  \diff s \\
& \leq C(K) \f{k}{\eta}  \Vert Q^i_\ep  \Vert_{L^1((0,T)\times K)} .
 \end{aligned}\]
 With the choice $\eta = \sqrt{k}$ we have the result.

Then, given \eqref{sys:L1loc} and \eqref{sys:Comp_t}, for all $i\in [\![1,N]\!]$, $\rho^i_\ep * u^i_\ep$ satisfies the assumptions of the Weil-Kolmogorov-Frechet theorem on $L^1((0,T)\times K),$ hence the sequence $(\rho^i_\ep * u^i_\ep)$ is compact in this space. 

\subsection{Convergence} \label{ss2.3}

Given the previous estimates, we can extract a subsequence (we do not relabel it for the sake of simplicity) such that
\begin{align}
\rho^{i}_\eps * u^{i}_\eps & \rightarrow u^{i}_0 \quad \mbox{ strongly in } \quad L^1(0,T;L^1_{\mbox{\scriptsize{loc}}}(\R^d)),
 \label{sys:lim_L1strongbis}\\ 
 \rho^{i}_\eps * u^{i}_\eps & \rightharpoonup  u^{i}_0 \quad \mbox{ weakly in } \quad L^2(0,T;H^1(\R^d)), \label{sys:lim_H1weakbis} \\
 u^{i}_\eps & \rightharpoonup  u^{i}_0 \quad \mbox{ weakly in } \quad L^1_{\mbox{\scriptsize{loc}}}((0,T)\times\R^d). \label{sys:lim_Cinfweakubis}
\end{align}

 In addition, from the uniform bound  $\rho^i_\eps*u^{i}_\eps \in L^2(0,T;L^{\f{2d}{d-2}}(\R^d))$ thanks to estimates \eqref{sys:L2d/d-2}, and the strong limit \eqref{sys:lim_L1strongbis}, we have
 \beq \label{sys:lim_Lpstrong}
 \rho^{i}_\eps * u^{i}_\eps \rightarrow u^{i}_0 \quad \mbox{ strongly in } \quad L^p(0,T;L^
q_{\mbox{\scriptsize{loc}}}(\R^d)) \quad \forall 1\leq p < 2, \; 1\leq q < \f{2d}{d-2}.
 \eeq

 We are left to show the convergence in Eq.~\eqref{sys:convol}. Let us take $\phi^i \in C_c^\infty([0,T]\times\R^d)$ for $i =1, ... ,N $. Then multiplying Eq.~\eqref{sys:convol} by $\phi^i$ and integrating by parts, we have 
\begin{align*}
 \int_0^T \int_{\R^d} \p_t u^{i}_\eps \phi^i &=  \int_0^T \int_{\R^d} \dv [ u^{i}_\eps  \sum_{j=1}^N \nabla_x K_\eps^{ij}* u^{j}_\eps ] \phi^i
\\
&=- \int_0^T \int_{\R^d}  u^{i}_\eps ( \sum_{j=1}^N \nabla_x K_\eps^{ij}* u^{j}_\eps ) \Phi^i
 \end{align*}
where we have used the notation  $\Phi^i=\nabla \phi^i$. For the term on the right-hand side, the weak limit \eqref{sys:lim_Cinfweakubis} shows that
 \[  \int_0^T \int_{\R^d} u^{i}_\eps \p_t \phi^i \rightarrow \int_0^T \int_{\R^d} u^{i}_0 \p_t \phi^i.  \]
The term on the right-hand side can be rewritten as
\[ \begin{aligned}
 \int_{ \R^d}  u^i_\eps \nabla ( \sum_{j=1}^N K_\eps^{ij} * u^j_\eps)  \Phi^i &= \sum_{k=1}^p \sum_{j=1}^N \alpha^{ki} \alpha^{kj} \int_{ \R^d}  \Phi^i(t,x) u^i_\eps . \nabla (\check{\rho}^{i}_\eps * \rho^{j}_\eps * u^j_\eps) 
\\
&= \sum_{k=1}^p \sum_{j=1}^N \alpha^{ki} \alpha^{kj}  \int_{ \R^d}  \rho^{i}_\eps* [\Phi^i(t,x) u^i_\eps] . \nabla (\rho^{j}_\eps * u^j_\eps)
\\
&= \sum_{k=1}^p \sum_{j=1}^N \alpha^{ki} \alpha^{kj} \int_{ \R^d}  [\Phi^i (t,x) \,  \rho^{i}_\eps* u^i_\eps +r^{i}_\eps].  \nabla (\rho^{j}_\eps * u^j_\eps),
\end{aligned}
\]
with $$r^{i}_\eps (t,x) =  \int_{ \R^d}   [\Phi^i(t,y) - \Phi^i(t,x)] u^i_\eps(t,y) \rho^i_\eps (x-y) \diff y  .$$
Then, we obtain a decomposition in two terms
\[
\int_{ \R^d}  u^i_\eps \nabla ( \sum_{j=1}^N K_\eps^{ij} * u^j_\eps)  \Phi^i = \sum_{k=1}^p \sum_{j=1}^N \alpha^{ki} \alpha^{kj}  \int_{ \R^d}    \rho^{i}_\eps* u^i_\eps \, \Phi^i (t,x) .  \nabla (\rho^{j}_\eps * u^j_\eps) + \sum_{k=1}^p \sum_{j=1}^N \alpha^{ki} \alpha^{kj} \int_{ \R^d}  r^{i}_\eps \ .  \nabla (\rho^{j}_\eps * u^j_\eps).
\]

The first term passes to the limit because of strong-weak limits. Indeed for $j,k \in [\![1,N]\!]$, we have successively
\begin{align}
\alpha^{ki} \rho^{i}_\eps* u^i_\eps & \rightarrow \alpha^{ki} u^i_0 \quad \mbox{strongly in } L^p(0,T;L^
q_{\mbox{\scriptsize{loc}}}(\R^d)) \quad \forall 1\leq p < 2, \; 1\leq q < \f{2d}{d-2}, 
\\
\alpha^{ki} \nabla \rho^{j}_\eps* u^j_\eps & \rightharpoonup  \alpha^{kj} \nabla u^j_0  \quad \mbox{weakly in } L^2((0,T)\times \R^d) ,
\end{align}
and thus
\begin{multline*}
  \sum_{k=1}^p \sum_{j=1}^N \alpha^{ki} \alpha^{kj} \int_{ \R^d}    \rho^{i}_\eps* u^i_\eps \, \Phi^i (t,x) .  \nabla (\rho^{j}_\eps * u^j_\eps) \\ \rightarrow \sum_{k=1}^p \sum_{j=1}^N \alpha^{ki} \alpha^{kj} \int_{ \R^d}    u^i_0 \, \Phi^i (t,x) .  \nabla  u^j_0 = \int_{ \R^d}    u^i_0 \, \Phi^i (t,x) . \sum_{j=1}^N  \gamma^{ij} \nabla  u^j_0   
\end{multline*}

For the second term, since $\nabla (\rho^{j}_\eps * u^j_\eps) \in L^2((0,T)\times \R^d)$, we need to prove that $r^{ki}_\eps$ converges to $0$ in $L^2((0,T)\times\R^d)$. With $L^i_\phi$ the Lipschitz constant for the function $\Phi^i$, we have
\[  \begin{aligned}
r^{i}_\eps (t,x) & \leq L^i_\phi   \int_{ \R^d}  | x-y | u^i_\eps(t,y) | \rho^i_\eps (x-y)|\diff y  \\
  & \leq L^i_\phi   \int_{ \R^d}  | x-y | u^i_\eps(t,y)  \rho^i_\eps (x-y) \diff y  \\
 & \leq L^i_\phi    \int_{ \R^d}  | x-y | \Big( u^i_\eps(t,y) | \rho^i_\eps (x-y)| \Big)^{\frac{d}{d+2}+ \frac{2}{d+2}} \diff y \\
 & \leq L^i_\phi  \Big[ \int_{ \R^d}  u^i_\eps(t,y)  \rho^i_\eps (x-y) \diff y \Big]^{\frac{d}{d+2}} \Big[ \int_{ \R^d} |x-y|^\frac{d+2}{2} u^i_\eps(t,y)  \rho^i_\eps (x-y) \diff y\Big]^{\frac{2}{d+2}} \\
 & \leq L^i_\phi  \Big[u^i_\eps(t) * \rho^i_\eps \Big]^{\frac{d}{d+2}} \Big[ \int_{ \R^d} |x-y|^\frac{d+2}{2} u^i_\eps(t,y)  \rho^i_\eps (x-y) \diff y\Big]^{\frac{2}{d+2}}.
\end{aligned}\]
Next, we use the H\" older inequality with $p=\frac{d+2}{d-2}$, $p'=\frac{d+2}4$ (notice however that the case $d=2$ is slighly different and left to the reader, then we choose $p <\infty$ and $p'>1$, but close to, and the same arguments work) and we write
\[ \begin{aligned}
\|r^{i}_\eps\|^2_{L^2(\R^d)} &\leq  {L^i_\phi}^2 \int_{ \R^d}  \Big[u^i_\eps(t,\cdot) * \rho^i_\eps \Big]^{\frac{2d}{d+2}} \Big[ \int_{ \R^d} |x-y|^\frac{d+2}{2} u^i_\eps(t,y)  \rho^i_\eps (x-y) \diff y\Big]^{\frac{4}{d+2}} \diff x 
\\
&\leq {L^i_\phi}^2 \| u^i_\eps(t,\cdot) * \rho^i_\eps \|^{\frac{2d}{d+2}}_{L^\frac{2d}{d-2}{  (\R^d)}}
\Big[ \int_{\R^{2d}} |x-y|^\frac{d+2}{2} u^i_\eps(t,y)  \rho^i_\eps (x-y) \diff y \diff x \Big]^{\frac{4}{d+2}}  
\\
&\leq {L^i_\phi}^2 \| u^i_\eps(t,\cdot) * \rho^i_\eps \|^{\frac{2d}{d+2}}_{L^\frac{2d}{d-2}{ (\R^d)}} \| u^i_\eps(t,y) \|_{L^1{ (\R^d)}}^\frac{4}{d+2} \eps^2 \big[\int_{\R^{d}} |z|^\frac{d+2}{2} \rho^i (z) \diff z\big]^\frac{4}{d+2},
\end{aligned} \]
where we have used the change of variable $x \to z= \frac{x-y}{\eps}$. Since we have proved that $ u^i_\eps(t) * \rho_\eps$ is uniformly bounded in $L^2(0,T;L^{\f{2d}{d-2}}(\R^d))$, we conclude that 
\[ \begin{aligned}
\| r^{i}_\eps\|^2_{L^2(0,T;L^2(\R^d)}  & \leq \eps^{2} {L^i_\phi}^2  
\big \|u^i_\eps \big\|^\frac{4}{d+2}_{L^\infty(0,T;L^1(\R^d))} 
\big \| z^\frac{d+2}{2} \rho  \big\|^\frac{4}{d+2}_{L^1(\R^d)}
\int_0^T  \big\| u^i_\eps(t,y) * \rho^i_\eps \big \|^{\frac{2d}{d+2}}_{L^\f{2d}{d-2}(\R^d)} \diff t \\
 & \leq \eps^{2} {L^i_\phi}^2  
\big \|u^i_\eps \big\|^\frac{4}{d+2}_{L^\infty(0,T;L^1(\R^d))} 
\big \| z^\frac{d+2}{2} \rho^i  \big\|^\frac{4}{d+2}_{L^1(\R^d)}
T^{\frac 2{d+2}} \Big(\int_0^T  \big\| u^i_\eps(t,y) * \rho^i_\eps \big \|^2_{L^\f{2d}{d-2}(\R^d)} \diff t \Big)^{\frac{d}{d+2}}
\end{aligned} \]

Thus
 \begin{multline*}
   \sum_{k=1}^p \sum_{j=1}^N \alpha^{ki} \alpha^{kj} \int_0^T \int_{\R^d} r^{ki}_\eps (t,x) \nabla (\rho^{kj}_\eps * u^j_\eps) \\ \leq \big( \sup_{(i,j)\in[\![1,N]\!]^2} | \alpha^{ki}|\big)^2 \sum_{k=1}^p \sum_{j=1}^N    \| r^{ki}_\eps\|^2_{L^2(0,T;L^2(\R^d))}  \| \nabla (\rho^{kj}_\eps * u^j_\eps) \|^2_{L^2(0,T;L^2(\R^d))}, \rightarrow 0.  
 \end{multline*}
 
Therefore, we have proved that at the limit the equation holds
\[ 
\int_0^T \int_{\R^d} u^{i}_0 \p_t \phi^i - \sum_{j,k=1}^N \alpha^{ki} \alpha^{kj}\int_0^T \int_{\R^d}  u^{i}_0 \nabla ( \sum_{j=1}^N u^{j}_0 ) \nabla\phi^i=0,
 \]
and thus $u^{i}_0 $ is solution of \eqref{sys:convol_limit}.

\section{Numerical simulations} \label{num}
We illustrate the localisation limit in 2D thanks to numerical simulations.  We consider a spatial square domain $\Omega = [x_{\min}, x_{\max}] \times [y_{\min}, y_{\max}]\in \mathbb{R}^2$ taken large enough so that the solution (spatial) support stays far from the boundaries. The spatial domain is discretized into $N_x \times N_x$ regularly spaced points with space step $\Delta x$ and we consider four species. The numerical schemes used to solve the nonlocal model \eqref{sys:convol} and the limiting model \eqref{sys:convol_limit} are adapted from \cite{carrillo_chertock_huang_2015,Barre2018}. 

We consider interaction kernels of the form \eqref{sys:as2} where the $\rho^i(x)$ for $i=1\hdots 4$ are chosen as Gaussian functions of variance $\sigma_i^2$ :
$$
\rho^i(x) = \frac{1}{2\pi \sigma_i^2} \exp^{- \frac{|x|^2}{2 \sigma_i^2}}.
$$
With this choice, $K^{ij}(x)$ is again a Gaussian function of variance $\sigma_i^2 + \sigma_j^2$. Therefore, $K^{ij}$ are of the form:
$$
K^{ij}(x) =  \frac{\big( A^T A \big)_{ij}}{2\pi (\sigma_i^2+\sigma_j^2)} \exp^{- \frac{|x|^2}{2 (\sigma_i^2+\sigma_j^2)}}.
$$
Note that thus defined, the coefficients $\alpha^{ij}$ of the matrix $A$ control the intensities of the interactions, while the variances $\sigma_i^2$ control the interaction distance between the different species. We consider three cases:
\begin{itemize}
    \item case 1: Species-dependent repulsion intensities: $(A^TA)_{ij} = \min(i,j)$ (the matrix A is an upper triangular matrix with coefficient equal to 1) and same interaction distance for all species, i.e., $\sigma_i = \sigma_j \quad \forall (i,j) =1\hdots 4$. These conditions satisfy hypothesis~\eqref{sys:as3}.       
    \item case 2: Species-dependent repulsion intensities: $(A^TA)_{ij} = \min(i,j)$ (the matrix A is an upper triangular matrix) and species-dependent interaction distances, i.e., $\sigma_i \neq \sigma_j \forall i\neq j$. These conditions satisfy hypothesis \eqref{sys:as3}.
    \item case 3: Same repulsion intensities for all species: $(A^T A)_{ij} = 1$ (matrix A is a matrix of rank 1) and species-dependent interaction distances, i.e., $\sigma_i \neq \sigma_j, \;  \forall i\neq j$. These conditions violate the full rank condition for matrix A, therefore we do not satisfy hypothesis \eqref{sys:as3}.
\end{itemize}

All species are initially supposed to be uniformly distributed in a ball of radius $S = 2$ centered in the center of the domain:
$$
\rho_i(x) = \frac{1}{\pi S^2} \mathds{1}_{B(0,S)}(x), \quad \forall \; i=1 \hdots 4,
$$
and we let the system evolve until time $t=100$. We aim to study numerically the convergence of the nonlocal system \eqref{sys:convol} to the local system \eqref{sys:convol_limit} as $\varepsilon \rightarrow 0$ in the different cases for the choice of the interaction kernel. 

\subsection{Case 1: same interaction distances for all species $\sigma_i^2 = \sigma_j^2 = 0.1$, different interaction intensities $(A^TA)_{ij} = \min(i,j)$}

We show in Fig. \ref{figure:cas1_spatial} the spatial distributions at time $t=100$ obtained with the nonlocal model \eqref{sys:convol} for different values of $\varepsilon$ ($\varepsilon = 5$, first column, $\varepsilon = 1$, second column and $\varepsilon = 0.5$, third column) and with the local model \eqref{sys:convol_limit} (fourth column) for each of the 4 species (different lines). 

\begin{figure}[h]
    \centering
    \includegraphics[scale = 0.23]{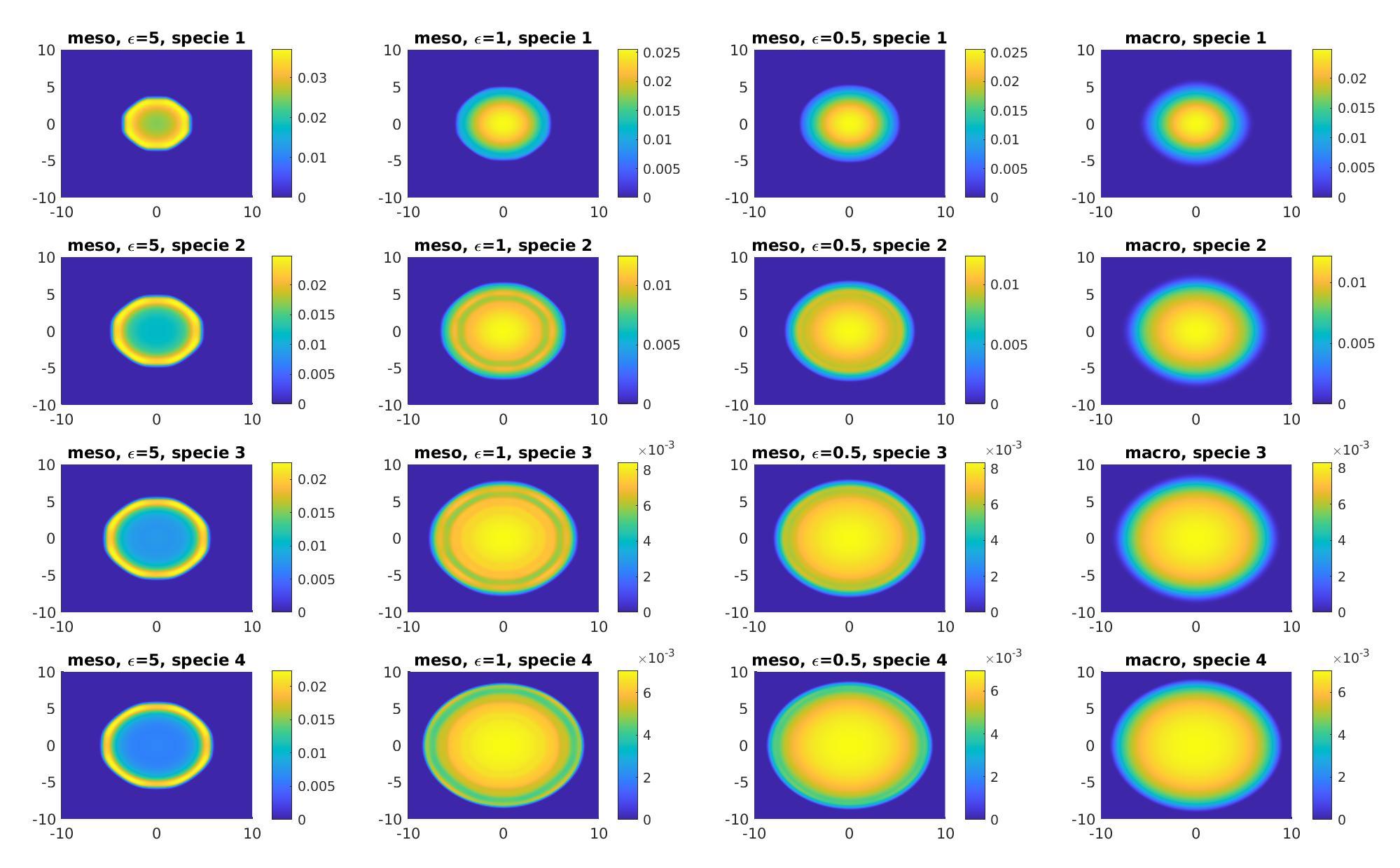}
    \caption{Case 1: Solutions of the nonlocal model \eqref{sys:convol} for different values of $\varepsilon$ ($\varepsilon = 5$, first column, $\varepsilon = 1$, second column and $\varepsilon = 0.5$, third column) and of the local model \eqref{sys:convol_limit} (fourth column) at time $t=100$, for each of the 4 species (different lines). } 
    \label{figure:cas1_spatial}
\end{figure}

As one can observe in Fig. \ref{figure:cas1_spatial} and as expected, the solution evolves radially in space. Moreover, for this choice of interaction kernel, we observe that the species with larger indices diffuse faster than the first species (compare the support of the solution from top to bottom). Indeed, for interaction intensities of the form $(A^TA)_{ij} = \min(i,j)$, species with larger indices interact more strongly with themselves than species with smaller indices, resulting in stronger repulsion. 
Comparing the columns from left to right, we observe that the nonlocal model is quite far from the local model for $\varepsilon = 5$ (first column), but seems to get closer as $\varepsilon$ decreases. For large $\varepsilon$, we observe the concentration of the solution in rings close to the boundary of the support due to the Gaussian form of the interaction, while reducing $\varepsilon$ diminishes this effect and the nonlocal model gets closer to the local one.

Since the solutions are spreading radially, an efficient way to characterize the
dynamics is to compute the radial distribution $g(\lambda)$, which gives the average density on rings of size $\diff\lambda$:
$$
g^\varepsilon_i(\lambda,t)\diff\lambda = \int_{B(0, \lambda + \diff\lambda) \backslash B(0,\lambda)} u^\varepsilon_i(x,t) \diff x,
$$
and we denote by $g^0_i(x,t)$ the quantity obtained in the same way with the solution of the local model $u^0_i(x,t)$. In the left subplots of Fig. \ref{figure:cas1_radial}, we show the quantities $g^0_i(\lambda,100)$ (plain lines) $g^\varepsilon_i(\lambda,100)$ (dotted lines) for different values of $\varepsilon$ (different colors) and for  each of the 4 species (different subplots). We again observe the concentration of the solutions of the nonlocal model in rings for large $\varepsilon$ (red curves), while the radial distributions of the nonlocal model get closer to the one of the local model when decreasing $\varepsilon$ (compare the green and blue lines). In order to quantify the convergence of the non-local to the local model as $\varepsilon \rightarrow 0$, we plot in Fig. \ref{figure:cas1_radial} the discrete $L^2$ distance between $u^\varepsilon$ and $u^0$ at time 100 as function of $\varepsilon$ for each of the 4 species.  
\begin{figure}[h]
    \centering
    \includegraphics[scale = 0.23]{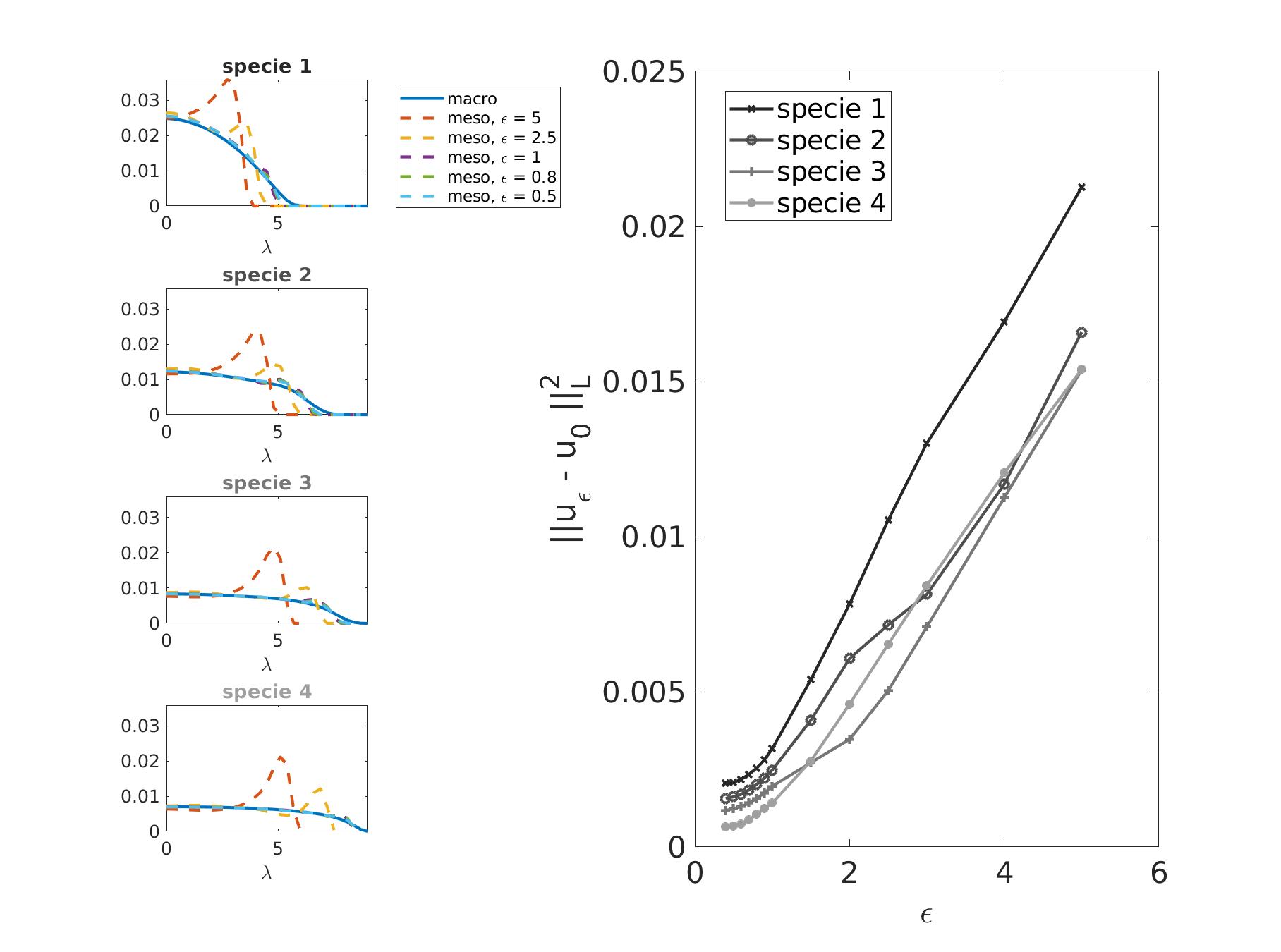}
    \vspace{-25pt}
    \caption{Case 1: Solutions of the nonlocal model \eqref{sys:convol} for different values of $\varepsilon$ ($\varepsilon = 5$, first column, $\varepsilon = 1$, second column and $\varepsilon = 0.5$, third column) and of the local model \eqref{sys:convol_limit} (fourth column) at time $t=100$, for each of the 4 species (different lines). } 
    \label{figure:cas1_radial}
\end{figure}

As one can see, the nonlocal model seems to converge to the local one as $\varepsilon$ decreases for all species, with a faster convergence rate for species~1 than for larger indices. Again, the ring effect is visible for large values of $\varepsilon$.

\subsection{Case 2: different interaction distances as function of the species $\sigma_i \neq \sigma_j$, different interaction intensities $(A^TA)_{ij} = \min(i,j)$}

We show in Fig.~\ref{figure:cas2_spatial} the simulations obtained in Case~2 with $\sigma_1^2 = 0.1, \sigma_2^2 = 0.2, \sigma_3^2 = 0.3, \sigma_4^2 = 0.4$ (increasing interaction distance with increasing species indices). We adopt the same representation as in Fig.~\ref{figure:cas1_spatial}. It is noteworthy that the local model obtained as the limit $\varepsilon \rightarrow 0$ of the nonlocal one is the same here as for Case~1. 

\begin{figure}[h]
    \centering
    \includegraphics[scale = 0.23]{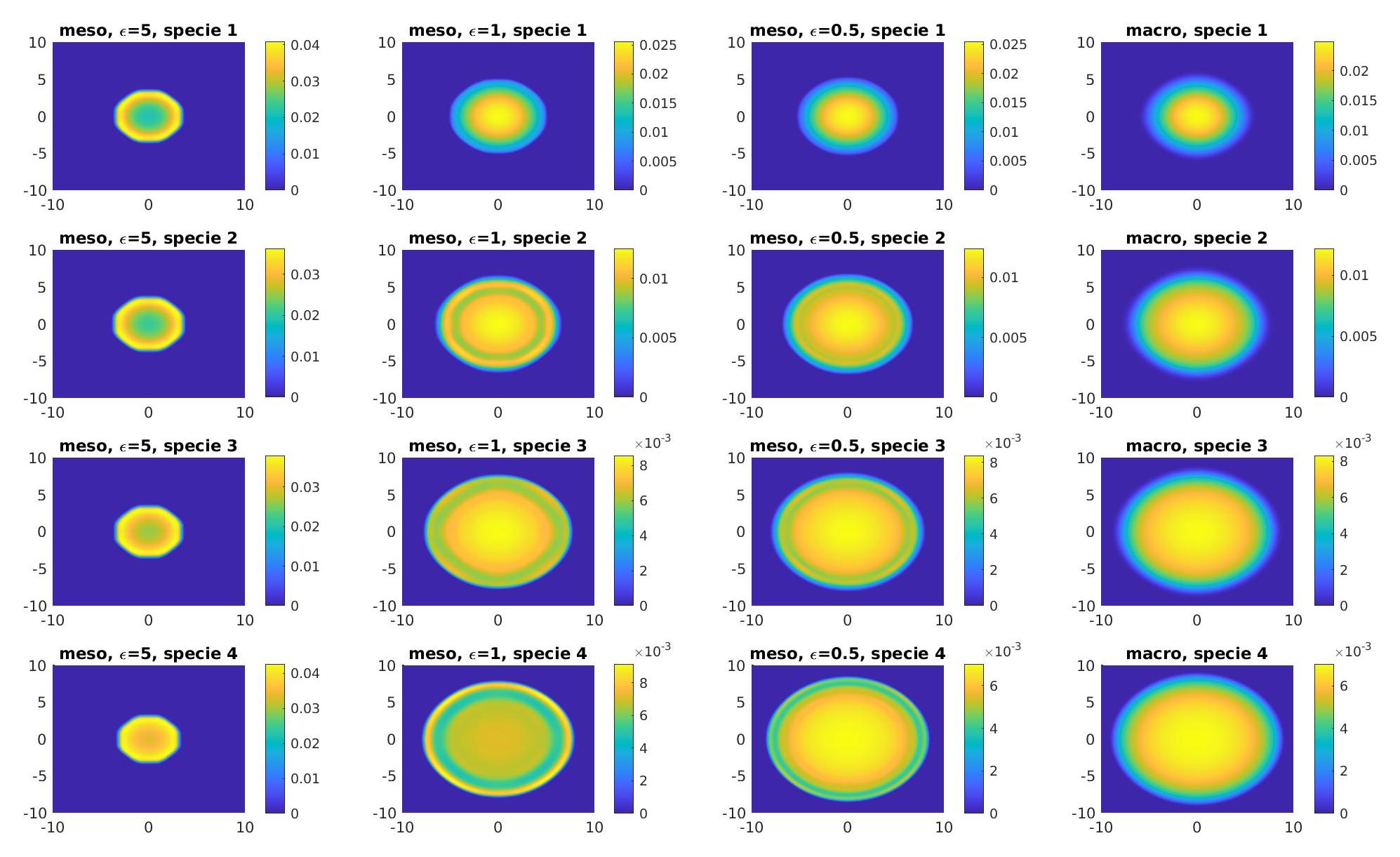}
    \caption{Case 2: Solutions of the nonlocal model \eqref{sys:convol} for different values of $\varepsilon$ ($\varepsilon = 5$, first column, $\varepsilon = 1$, second column and $\varepsilon = 0.5$, third column) and of the local model \eqref{sys:convol_limit} (fourth column) at time $t=100$, for each of the 4 species (different lines). } 
    \label{figure:cas2_spatial}
\end{figure}

As one can observe in Fig.~\ref{figure:cas2_spatial}, for $\varepsilon = 5$, a quite different behavior is obtained when the interaction distance is species-dependent compared to when the species interact at the same distance (compare the first column of this figure with the one of Fig.~\ref{figure:cas1_spatial}). In this case $\varepsilon = 5$ indeed, species with larger indices do not diffuse faster than species with lower indices as observed previously, although the coefficients $\gamma^{ij}$ are the same. This is due to the fact that the pairs interacting the stronger (large species indices) are also the ones interacting the farer, reducing de facto the interaction intensity (because of the choice of the Gaussian forms for $\rho^i$). As a result, all species diffuse less than in the previous case, but this effect is damped when localizing the interaction (i.e., decreasing $\varepsilon$). For small values of $\varepsilon = 0.5$ indeed, we recover a profile close to the one obtained with the local model (compare the last two columns of Fig. \ref{figure:cas2_spatial}). 

\begin{figure}[h]
    \centering
    \includegraphics[scale = 0.23]{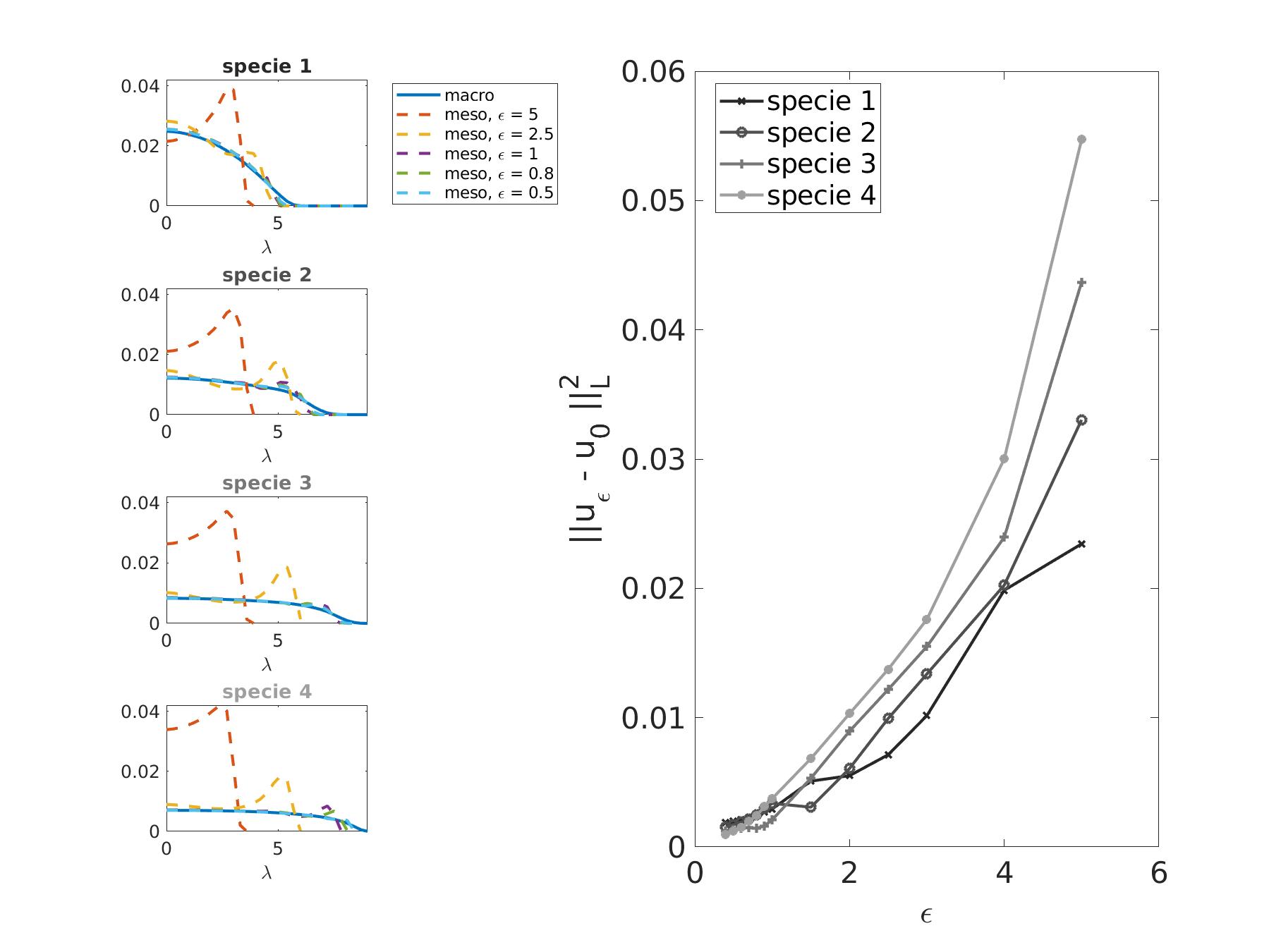}
    \vspace{-25pt}
    \caption{Case 2: Solutions of the nonlocal model \eqref{sys:convol} for different values of $\varepsilon$ ($\varepsilon = 5$, first column, $\varepsilon = 1$, second column and $\varepsilon = 0.5$, third column) and of the local model \eqref{sys:convol_limit} (fourth column) at time $t=100$, for each of the 4 species (different lines). } 
    \label{figure:cas2_radial}
\end{figure}

Regarding the convergence of the nonlocal to local model (Fig. \ref{figure:cas2_radial}), we observe that contrary to Case~1, species~4 seems to converge faster to the local model as $\varepsilon \rightarrow 0$ than the other species. This is because species~4 is the one the most impacted by the interaction distance controlled by $\varepsilon$.

\subsection{Case 3: different interaction distances as function of the species $\sigma_i \neq \sigma_j$, same interaction intensities $(A^TA)_{ij} = 1, \;  \forall (i,j)$}
\label{sec:case3}

Finally, we aim to study the convergence of the nonlocal to the local model when the hypothesis \eqref{sys:as3} of our convergence theorem is not satisfied. Namely, we consider here the case where matrix $A$ is of rank~1, i.e., all species interact with the same intensity $(A^TA)_{ij} = 1 \forall (i,j)$.

As before, we show in Fig.~\ref{figure:cas3_spatial} the simulations obtained in Case~3 with $\sigma_1^2 = 0.1$, $\sigma_2^2 = 0.2$, $\sigma_3^2 = 0.3$, $\sigma_4^2 = 0.4$ (increasing interaction distance with increasing species index). We adopt the same representation as in Figs.~\ref{figure:cas1_spatial} and \ref{figure:cas2_spatial}. 

\begin{figure}[h]
    \centering
    \includegraphics[scale = 0.23]{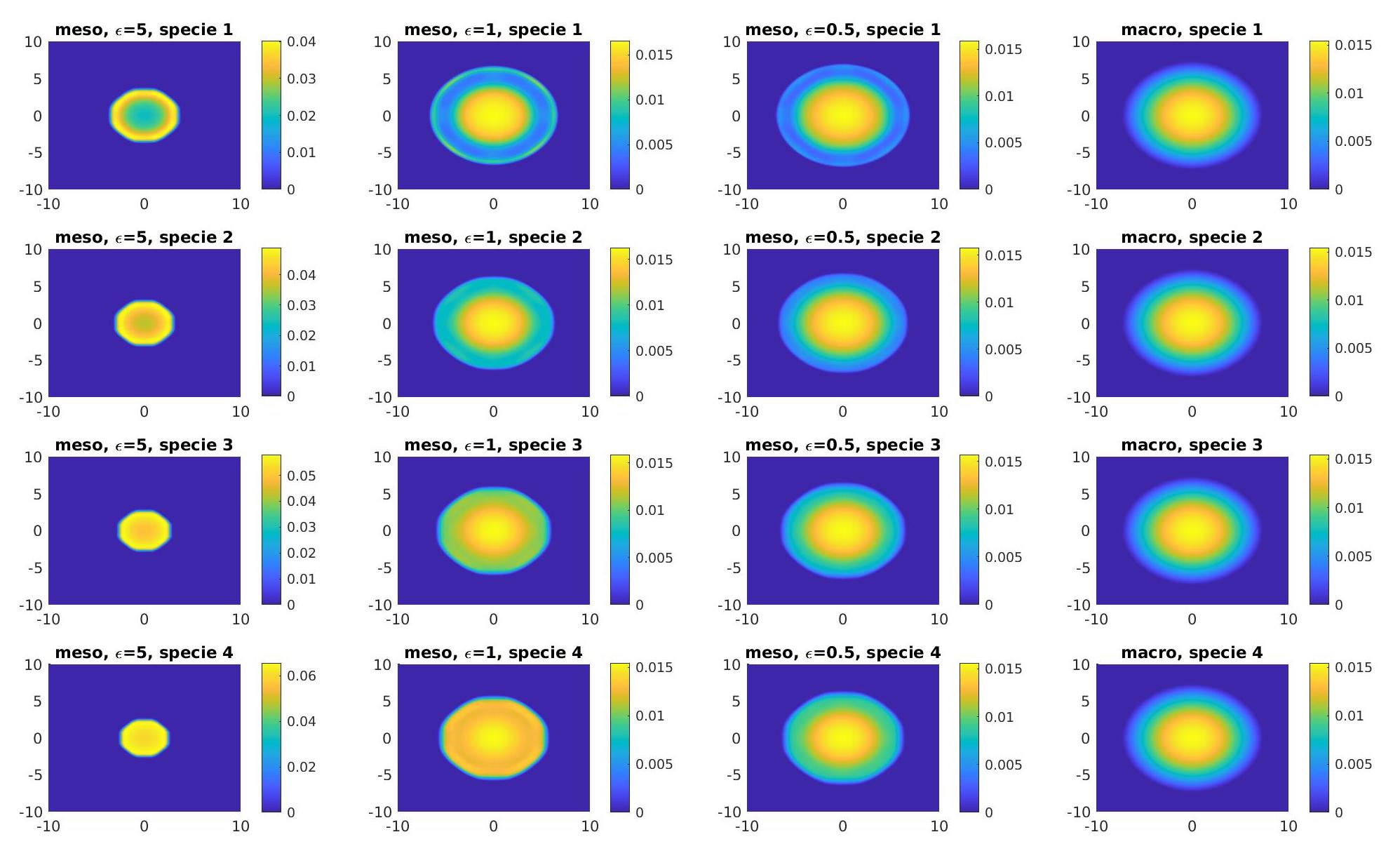}
    \caption{Case 3: Solutions of the nonlocal model \eqref{sys:convol} for different values of $\varepsilon$ ($\varepsilon = 5$, first column, $\varepsilon = 1$, second column and $\varepsilon = 0.5$, third column) and of the local model \eqref{sys:convol_limit} (fourth column) at time $t=100$, for each of the 4 species (different lines). } 
    \label{figure:cas3_spatial}
\end{figure}

In this case, the diffusion is even slower for species with large indices compared to Case~2 for $\varepsilon = 5$. It is noteworthy that for this case, all species behave in the same way in the localisation limit, as expected since all coefficients for the interactions are the same and the interaction distances do not play a role in the localisation limit. We still observe a quite good correspondence between the nonlocal and local model as $\varepsilon$ decreases, even though our interaction kernel does not satisfy the full rank hypothesis of $A$. 
These results are confirmed in Fig.~\ref{figure:cas3_radial}, where we observe that the error decreases for all species as $\varepsilon \rightarrow 0$. 

\begin{figure}[h]
    \centering
    \includegraphics[scale = 0.23]{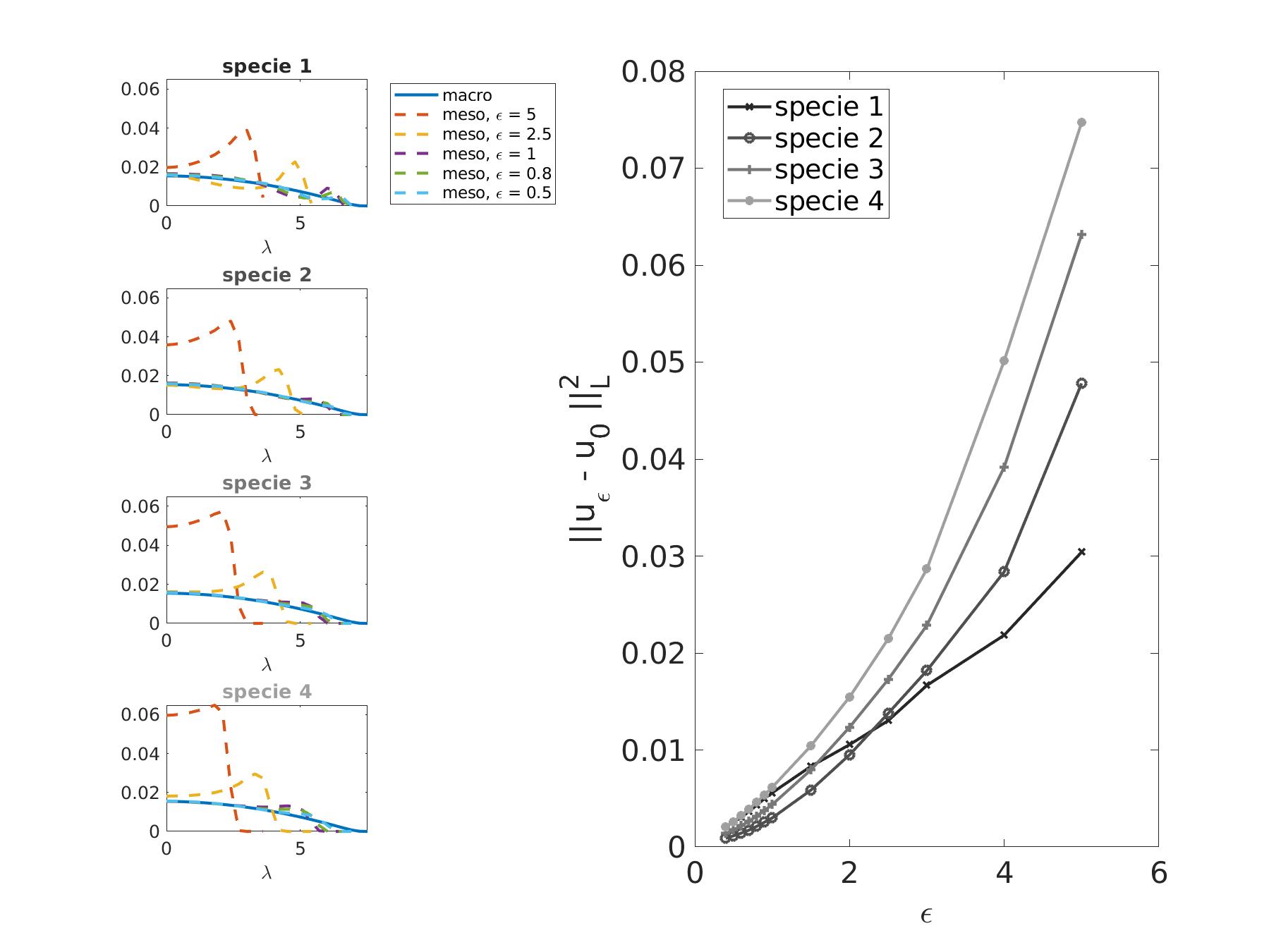}
    \vspace{-25pt}
    \caption{Case 3: Solutions of the nonlocal model \eqref{sys:convol} for different values of $\varepsilon$ ($\varepsilon = 5$, first column, $\varepsilon = 1$, second column and $\varepsilon = 0.5$, third column) and of the local model \eqref{sys:convol_limit} (fourth column) at time $t=100$, for each of the 4 species (different lines). } 
    \label{figure:cas3_radial}
\end{figure}

These numerical results suggest that the nonlocal model converges to the local one when $\varepsilon \rightarrow 0$ in a more general framework than the one considered in this paper, i.e., without the full rank condition for $A$.  

\section{Conclusion and perspectives}

We have established the convergence of solutions from non-local to local systems describing aggregation for both single and multi-species population, a subject that have attracted a lot of attention recently. The major new feature in our analysis is that we do not need diffusion to gain compactness, at odd with much of the existing literature including one of the most advanced result in~\cite{JungelPortischZurek_2022}. The central compactness result is provided by a full rank assumption on the interaction kernel. To remove this condition is a challenging question which is certainly possible, in some cases at least, in view  of the numerical simulations of Sec.~\ref{sec:case3}.

In turn, we prove existence of weak solutions for the resulting system, a cross-diffusion system of quadratic type, Eq.~\eqref{sys:convol}. This system is always symmetric due to the use of an energy (gradient flow) structure which is fundamental to provide the necessary a priori estimates. To extend the type of interaction is also an interesting question. 

Notice that the form of this system differs from a closely related class, namely
\[
\p_t u^{i}_0 -  \Delta \big[ \,  u^{i}_0 (1+\sum_{j=1}^N \gamma^{ij}  u^{j}_0 ) \, \big]=0 , \qquad t \geq 0 , \, x \in \R^d, \, \qquad i \in [\![1,N]\!],
\label{sys:convol_limit2}
\]
which is also an active field of research, \cite{Jungel_Cross2004,MoussaCD20} for which the nonlocal to local convergence is also studied.

\medskip

{\bf Acknowledgments.} DP was supported by Sorbonne Alliance University with an Emergence project MATHREGEN, grant number S29-05Z101 and by Agence Nationale de la Recherche (ANR) under the project grant number ANR-22-CE45-0024-01. The authors warmly thank Markus Schmidtchen for his careful reading and suggestions for the bibliography.
%
%
%
\bibliographystyle{plain}

\bibliography{mesomacro.bib}

\end{document}